\documentclass{article}
\usepackage{epsfig}
\usepackage{amsmath}
\usepackage{amsfonts}
\usepackage{amsthm}
\usepackage{algorithm}
\usepackage{algorithmic}
\usepackage{graphicx}
\usepackage{subfigure}

\newcommand {\C}        {{\mathbb{C}}}

 \newtheorem{theorem}{Theorem}[section]

 \newtheorem{corollary}[theorem]{Corollary}

\setlength{\textwidth}{5.6in}
\setlength{\oddsidemargin}{0.5in}
\setlength{\evensidemargin}{0.5in}

\bibliographystyle{plain}

\title{ Matrix Polynomials with Specified Eigenvalues }
\author{
Michael~Karow\thanks{ Department of Mathematics, TU-Berlin {\tt (karow@math.tu-berlin.edu.tr)}. }
\and
Emre~Mengi\thanks{ Department of Mathematics, Ko\c{c} University,
Rumelifeneri Yolu, 34450 Sar{\i}yer-\.{I}stanbul, Turkey {\tt (emengi@ku.edu.tr)}. 
The work of this author was supported in part by the European Commision grant
PIRG-GA-268355 and the T\"{U}B\.{I}TAK (the scientific and technological research council
of Turkey) carrier grant 109T660.
 }
}

\begin{document}
\maketitle

\begin{abstract}
\noindent
This work concerns the distance in 2-norm from a matrix polynomial to a nearest polynomial 
with a specified number of its eigenvalues at specified locations in the complex plane.
Perturbations are allowed only on the constant coefficient matrix. Singular value optimization 
formulas are derived for these distances facilitating their computation. The singular value 
optimization problems, when the number of specified eigenvalues is small, can be solved 
numerically by exploiting the Lipschitzness and piece-wise analyticity of the singular values 
with respect to the parameters. \\

\noindent
\textbf{Key words.} 
Matrix Polynomial, Linearization, Singular Values, Sylvester Equation, Eigenvalue Perturbation  \\

\noindent
\textbf{AMS subject classifications.}
65F15, 65F18, 47A56
\end{abstract}

\pagestyle{myheadings}
\thispagestyle{plain}
\markboth{ M. KAROW AND E. MENGI }{ POLYNOMIALS WITH SPECIFIED EIGENVALUES }

\section{Introduction}

We study the distance from a matrix polynomial to a nearest polynomial with specified number
of eigenvalues at specified positions in the complex plane. Formally, let $P : {\mathbb C} \rightarrow {\mathbb C}^{n\times n}$,
defined by
\begin{equation}\label{eq:matrix_poly}
	P(\lambda) := \sum_{j=0}^m  \lambda^j A_j,
\end{equation}
be a square matrix polynomial where $A_j \in \C^{n\times n}$. Throughout the paper,
we will assume that ${\rm rank}(A_m) = n$. Suppose also that a set 
${\mathbb S}	:=	\{	\lambda_1, \dots, \lambda_s 	\}$ consisting of complex scalars and 
a positive integer $r$ are given. This paper provides a singular value formula for the distance 
\begin{equation}\label{eq:distance_defn}
	\tau_r({\mathbb S})
			:=
		\inf
		\left\{
				\| \Delta \|_2	\;\;\;	|	\;\;\;	
					\Delta \in {\mathbb C}^{n\times n}
						\;\;\; 	{\rm s.t.}	\;\;\;
							\sum_{j=1}^s
							m_j	
							\left( 
								P + \Delta 		
							\right)
									\geq 
								r
		\right\}
\end{equation}
where $m_j(P + \Delta)$ denotes the algebraic multiplicity of $\lambda_j$ as an eigenvalue of 
$P_\Delta(\lambda) := P(\lambda) + \Delta$, that is the multiplicity of $\lambda_j$ as a root of the 
polynomial $\det \left( P_\Delta(\lambda) \right)$.

The formula derived is a generalization of the singular value characterization in \cite{Kressner2011}
for a linear matrix pencil of the form $L(\lambda) = A_0 + \lambda A_1$, which was inspired by
the Malyshev's work \cite{Malyshev1999} earlier. However, unlike \cite{Malyshev1999} the derivation
here fully depends on a Sylvester equation characterization for a matrix polynomial to have
sufficiently many eigenvalues belonging to ${\mathcal S}$. This yields a neater derivation.
The distance from a matrix polynomial to a nearest one with a multiple eigenvalue was considered
in \cite{Papat2008}, where singular value formulas yielding lower and upper bounds were derived.
In \cite{Papat2008} perturbations to all of the coefficient matrices were allowed, but it is not
clear how tight the derived bounds are. Here the derived singular value formula, when the number of 
prescribed eigenvalues in ${\mathcal S}$ is small, facilitates the numerical computation of the distances by 
means of the algorithms exploiting the Lipschitzness \cite{Piyavskii1972, Shubert1972} and piece-wise 
analyticity of singular values \cite{Kilic2012}.

The Sylvester equation that we utilize is of the form
\[
	A_0 X	+	A_1 X C	+ 	A_2 X C^2	+	\dots		+	A_m X C^m = 0.
\]
Our approach is based on seeking an upper triangular $C$ so that the linear space consisting of 
matrices $X$ satisfying this equation is of dimension at least $r$. A pair $(X, C)$ satisfying the
equation is named as an invariant pair in \cite{Betcke2011}, where a perturbation theory and
numerical approaches are developed for an invariant pair. In the special case when $C$ is
in the Jordan canonical form, the pair $(X,C)$ is called a Jordan pair \cite[Chapter 2]{Gohberg1982}. 
In the extreme case when all eigenvalues are prescribed so that $r = mn$, then the pair $(X,C)$
is closely related to a (right) standard paper \cite[Chapter 2]{Gohberg1982}, \cite[Chapter 5]{Gohberg2006},
which has an important place for linearizations of matrix polynomials.

In the next section, we derive the characterization in terms of the Sylvester equation above  
for the condition $\sum_{j=1}^s   m_j ( P )   \geq   r $. Then we turn the Sylvester characterization
into a rank problem. The rank characterization provides a singular value formula bounding 
the actual distance from below right away due to the Eckart-Young theorem. In Section \ref{sec:optimal_pert} we 
establish the exact equality of the singular value formula with the distance by constructing an optimal 
perturbation. The derived singular value formula (Theorem \ref{thm:main_result}) can be conveniently 
expressed in terms of divided differences (Theorem \ref{thm:main_result2} and Corollary \ref{thm:main_result3}). 
Section \ref{sec:num_examp} illustrates the validity of the results in practice on two examples and making 
connections with the $\epsilon$-pseudospectrum for a matrix polynomial.

\section{Rank characterization for polynomials with specified eigenvalues}

We first deduce a rank characterization, for a given set of complex scalars
	${\mathbb S}	:=	\{	\lambda_1, \dots, \lambda_s 	\}$
and a positive integer $r$, that confirms whether the scalars in ${\mathbb S}$ are eigenvalues 
of the polynomial $P(\lambda)$ as defined in (\ref{eq:matrix_poly}) with algebraic multiplicities summing 
up to $r$ or greater. Formally, we are seeking a rank characterization for the condition
 \[
 	\sum_{j=1}^s	m_j(P)	\geq 		r.
 \]
 
 The derivation exploits the companion form linearization 
 ${\mathcal L}(\lambda) := {\mathcal A}  +  \lambda {\mathcal B}$ for $P(\lambda)$ with
 \begin{equation}\label{eq:linearization}
 	{\mathcal A}
				:=
		\left[
			\begin{array}{cccc}
					0 	& I		&		& 0	\\
						&		& \ddots    &  	\\
					0	& 0		&	&	I \\
					A_0	& A_1	&	& A_{m-1} \\
			\end{array}
		\right]	\;\; {\rm and} \;\;
	{\mathcal B}
				:=
		\left[
			\begin{array}{cccc}
					-I &		& 0	& 0	 \\
					  & \ddots	& 	&	  \\
					0 &		& -I	& 0	  \\
					0 &		& 0	& A_m \\
			\end{array}
		\right],
 \end{equation}
and benefits from the fact that the eigenvalues of ${\mathcal L}(\lambda)$ and $P(\lambda)$ are 
the same with the same algebraic multiplicities. Due to the assumption that ${\rm rank}(A_m) = n$
the matrix ${\mathcal B}$ is full rank. Consequently, we could apply Theorem \ref{thm:Syl_Pencil} 
concerning the multiplicities of the eigenvalues of matrix pencils given below to the pencil ${\mathcal L}(\lambda)$. 
The result  originally appeared in \cite[Theorem 3.3]{Kressner2011} in a more general setting. For the 
theorem we introduce the notation
\begin{equation} \label{eq:cmu}
 	C(\mu,\Gamma)
		=
		\left[
			\begin{array}{cccc}
				\mu_1 &  \gamma_{21} & \dots &  \gamma_{r1} \\
					0 &	\mu_2	     & \ddots & \vdots \\[-0.1cm]
					   &			     & \ddots & \gamma_{r(r-1)}		\\
					 0 &			     &		  & \mu_r \\		
			\end{array}
		\right],
\end{equation}
where
\[
	  \mu 
			=
		\left[
			\begin{array}{cccc}
				\mu_1 	&	\mu_2	& 	\dots 	&	\mu_r
			\end{array}
		\right]^T \in {\mathbb S}^r \quad {\rm and} \quad
	  \Gamma
			=
		\left[
			\begin{array}{cccc}
				\gamma_{21} & \gamma_{31} & \dots & \gamma_{r,r-1}
			\end{array}
		\right]^T \in \C^{r(r-1)/2}	
\]
and ${\mathbb S}^r$ represents the $r$ tuples with elements from the set ${\mathbb S}$.
We also denote the generic set of $\Gamma$ values such that $C(\mu,\Gamma)$ 
has all eigenvalues with geometric multiplicities equal to one by ${\mathcal G}(\mu)$
(for genericity of such $\Gamma$ values see \cite{Demmel1995}).

\begin{theorem}\label{thm:Syl_Pencil}
	Let $L(\lambda) := A + \lambda B$ be a matrix pencil with $A,B \in {\mathbb C}^{n\times n}$ and such that 
	${\rm rank}(B) = n$, ${\mathbb S}	:=	\{	\lambda_1, \dots, \lambda_s 	\}$ be a set of complex scalars, 
	and $r\in {\mathbb Z}^+$. The following two conditions are equivalent:
	\begin{enumerate}
		\item[\bf (1)]	$ \sum_{j=1}^s m_j (A,B)	\geq 		r$ where $m_j(A,B)$ is the algebraic multiplicity of
		$\lambda_j$ as an eigenvalue $L(\lambda) = A+\lambda B$.
		\item[\bf (2)]	There exists a $\mu \in {\mathbb S}^r$ such that for all $\Gamma \in {\mathcal G}(\mu)$
				\[
					{\rm dim}
						\left\{
							X\in {\mathbb C}^{n\times r}	\;
										|	\;
							AX	+	BXC(\mu,\Gamma)	=	0
						\right\}	\geq r.
				\]
	\end{enumerate}
\end{theorem}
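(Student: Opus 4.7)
The plan is to leverage the invertibility of $B$ to convert the generalized Sylvester equation $AX + BX\,C(\mu, \Gamma) = 0$ into the standard Sylvester equation $MX = X\,C(\mu, \Gamma)$, where $M := -B^{-1}A$. Since $\det(A + \lambda B) = \det(B)\det(\lambda I - M)$, the eigenvalues of $L(\lambda)$ agree with those of $M$ together with their algebraic multiplicities, so $m_j(A,B) = m_j(M)$ for each $j$. The equivalence of (1) and (2) will then reduce to comparing the Jordan structures of $M$ and $C(\mu,\Gamma)$.

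First I would observe that, for any $\Gamma \in {\mathcal G}(\mu)$, every eigenvalue of the upper-triangular matrix $C(\mu,\Gamma)$ has geometric multiplicity one, so its Jordan canonical form consists of a single Jordan block of size $r_j$ at each $\lambda_j$, where $r_j$ denotes the number of occurrences of $\lambda_j$ in $\mu$. After reducing $M$ and $C(\mu,\Gamma)$ to Jordan form via a change of variables $X \mapsto P^{-1} X Q$, the equation $MX = X\,C(\mu,\Gamma)$ decouples into Jordan-block-level equations of the form $J_p(\alpha) Y = Y J_q(\beta)$. The classical fact that such an equation admits only the trivial solution when $\alpha \neq \beta$, and a solution space of dimension $\min(p,q)$ when $\alpha = \beta$ (verified by reducing to $N_p Y = Y N_q$ for the nilpotent shifts), is the main technical input and I expect this dimension formula to be the principal obstacle.

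Aggregating contributions over all Jordan-block pairs then yields
\[
	\dim\left\{ X \in \C^{n \times r}	\;	|	\;	MX = X\,C(\mu, \Gamma) \right\}	\;=\;	\sum_{j=1}^{s} \sum_{i} \min(p_{j,i}, r_j),
\]
where $p_{j,1}, p_{j,2}, \ldots$ are the sizes of the Jordan blocks of $M$ at $\lambda_j$ (an empty list if $\lambda_j$ is not an eigenvalue of $M$). In particular the dimension depends only on $\mu$ and not on $\Gamma \in {\mathcal G}(\mu)$, which justifies the ``for all $\Gamma$'' quantifier in condition (2).

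The equivalence finally follows from two elementary estimates. For $(2) \Rightarrow (1)$, the bound $\sum_i \min(p_{j,i}, r_j) \leq \sum_i p_{j,i} = m_j(A,B)$ gives $r \leq \sum_{j} m_j(A,B)$ immediately. For $(1) \Rightarrow (2)$, I would choose integers $0 \leq r_j \leq m_j(A,B)$ with $\sum_j r_j = r$ (possible by (1)) and let $\mu$ list each $\lambda_j$ with multiplicity $r_j$; a short case split on whether $r_j \leq p_{j,1}$ or $r_j > p_{j,1}$ shows $\sum_i \min(p_{j,i}, r_j) \geq r_j$, and summing over $j$ yields dimension at least $r$, as required.
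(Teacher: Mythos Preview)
The paper does not actually prove Theorem~\ref{thm:Syl_Pencil}; it simply quotes the result from \cite[Theorem~3.3]{Kressner2011} and uses it as input for Theorem~\ref{thm:Syl_Polynomial}. So there is no in-paper proof to compare your argument against.

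That said, your proposal is a correct self-contained proof. The reduction to $MX = XC(\mu,\Gamma)$ with $M = -B^{-1}A$ is valid because ${\rm rank}(B)=n$, and the identification $m_j(A,B)=m_j(M)$ follows from $\det(A+\lambda B)=\det(B)\det(\lambda I - M)$. Your use of the classical dimension count for the intertwining space of two Jordan blocks, together with the observation that $C(\mu,\Gamma)$ for $\Gamma\in{\mathcal G}(\mu)$ has exactly one Jordan block per distinct diagonal entry, gives the formula
\[
\dim\{X : MX = XC(\mu,\Gamma)\} \;=\; \sum_{j=1}^{s}\sum_{i}\min(p_{j,i},r_j),
\]
and both implications follow from it exactly as you describe. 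The case split in $(1)\Rightarrow(2)$ is also fine: either some $p_{j,i}\geq r_j$ and that single term already contributes $r_j$, or every $p_{j,i}<r_j$ and then $\sum_i\min(p_{j,i},r_j)=\sum_i p_{j,i}=m_j(A,B)\geq r_j$. Your remark that the dimension is independent of $\Gamma\in{\mathcal G}(\mu)$, hence the ``for all $\Gamma$'' quantifier is harmless, is a useful clarification the paper leaves implicit.
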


\begin{theorem}\label{thm:Syl_Polynomial}
	Let $P(\lambda) := \sum_{j=0}^m \lambda^j A_j$ with $A_j \in {\mathbb C}^{n\times n}$ and such that 
	${\rm rank}(A_m) = n$, ${\mathbb S}	:=	\{	\lambda_1, \dots, \lambda_s 	\}$ be a set of complex 
	scalars, and $r\in {\mathbb Z}^+$. The following two conditions are equivalent:
	\begin{enumerate}
		\item[\bf (1)]	$ \sum_{j=1}^s m_j (P)	\geq 		r$ where $m_j(P)$ is the algebraic multiplicity of
		$\lambda_j$ as an eigenvalue $P(\lambda)$.
		\item[\bf (2)]	There exists a $\mu \in {\mathbb S}^r$ such that for all $\Gamma \in {\mathcal G}(\mu)$
				\[
					{\rm dim}
						\left\{
							X\in {\mathbb C}^{n\times r}	\;
										|	\;
							\sum_{j=0}^m 	A_jXC^j(\mu,\Gamma)	=	0
						\right\}	\geq r.
				\]
	\end{enumerate}
\end{theorem}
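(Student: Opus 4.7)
The plan is to reduce the statement to Theorem \ref{thm:Syl_Pencil} via the companion linearization $\mathcal{L}(\lambda) = \mathcal{A} + \lambda \mathcal{B}$ of $P(\lambda)$ introduced in (\ref{eq:linearization}). The eigenvalues of $P$ and $\mathcal{L}$ coincide with identical algebraic multiplicities, so condition (1) of Theorem \ref{thm:Syl_Polynomial} is literally the same as condition (1) of Theorem \ref{thm:Syl_Pencil} applied to $\mathcal{L}$. Moreover, the assumption $\mathrm{rank}(A_m)=n$ makes $\mathcal{B}$ nonsingular, so Theorem \ref{thm:Syl_Pencil} is indeed applicable.

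It then remains to translate the Sylvester equation $\mathcal{A} Y + \mathcal{B} Y C(\mu,\Gamma) = 0$, whose unknown $Y \in \mathbb{C}^{mn \times r}$ lives in the linearized space, into the polynomial Sylvester equation $\sum_{j=0}^m A_j X C^j(\mu,\Gamma) = 0$ for $X\in\mathbb{C}^{n\times r}$, and to show that this translation preserves the dimension of the solution space. This is the only substantive step, and it is essentially algebraic rather than deep.

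Concretely, I would partition $Y$ into block rows $Y_1,\dots,Y_m \in \mathbb{C}^{n\times r}$ and expand $\mathcal{A}Y + \mathcal{B}YC = 0$ using (\ref{eq:linearization}). Writing $C=C(\mu,\Gamma)$ for brevity, the first $m-1$ block rows force the recursion $Y_{i+1} = Y_i C$, and an induction yields $Y_i = Y_1 C^{i-1}$. Substituting these identities into the last block row produces
\[
A_0 Y_1 + A_1 Y_1 C + \cdots + A_{m-1} Y_1 C^{m-1} + A_m Y_1 C^m = 0,
\]
which is precisely the polynomial Sylvester equation in $X := Y_1$. Conversely, any solution $X$ of the polynomial equation lifts uniquely to a solution $Y = [X^{\top}\;\; (XC)^{\top}\;\; \cdots\;\; (XC^{m-1})^{\top}]^{\top}$ of the pencil equation. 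This establishes a linear isomorphism between the two solution spaces, so their dimensions agree.

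Combining the three steps, condition (1) for $P$ is equivalent to condition (1) for $\mathcal{L}$, which by Theorem \ref{thm:Syl_Pencil} is equivalent to condition (2) for $\mathcal{L}$, which by the isomorphism above is equivalent to condition (2) for $P$. I do not anticipate a genuine obstacle; the only thing to be careful about is verifying that the block-row expansion of $\mathcal{A}Y + \mathcal{B}YC$ is carried out correctly so that the recursion $Y_{i+1} = Y_i C$ really does emerge, and that the resulting map $X \mapsto Y$ is both well-defined and injective.
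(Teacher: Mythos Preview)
Your proposal is correct and follows essentially the same route as the paper: apply Theorem~\ref{thm:Syl_Pencil} to the companion linearization, then block-partition the unknown in the pencil Sylvester equation to obtain the recursion $Y_{i+1}=Y_iC$ from the first $m-1$ block rows and the polynomial Sylvester equation from the last, yielding a linear bijection between the two solution spaces. The only cosmetic difference is indexing (the paper uses $X_0,\dots,X_{m-1}$ where you use $Y_1,\dots,Y_m$).
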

\begin{proof}
We apply Theorem \ref{thm:Syl_Pencil} to the linearization (\ref{eq:linearization}) for $P(\mu)$. It follows
from Theorem \ref{thm:Syl_Pencil} that the condition $ \sum_{j=1}^s m_j (P) \geq r$ is met if and only if
\[
			{\rm dim}
				\{
					{\mathcal X}\in {\mathbb C}^{mn\times r}	\;
						|	\;
					{\mathcal A}{\mathcal X}	+	{\mathcal B}{\mathcal X}C(\mu,\Gamma)	=	0
				\}	\geq r.
\]
On the other hand the partitioning 
	${\mathcal X}
		=
		\left[
			\begin{array}{cccc}
				X_0^T	&	X_1^T	&	\dots		&	X_{m-1}	
			\end{array}
		\right]^T
	$ 
where $X_j \in {\mathbb C}^{n\times r}$ reveals that the condition
\[
	0 = {\mathcal A}{\mathcal X}	+	{\mathcal B}{\mathcal X}C(\mu,\Gamma)
	   =
	   	\left[
				\begin{array}{c}
					X_1 \\
					\vdots \\
					X_{m-1} \\
					\sum_{j=0}^{m-1} A_j X_j
				\end{array}
		\right]
				+
		\left[
				\begin{array}{c}
					-X_0 C(\mu,\Gamma) \\
					\vdots \\
					-X_{m-2} C(\mu,\Gamma) \\
					A_m X_{m-1} C(\mu,\Gamma) \\
				\end{array}
		\right]
\]
could be expressed as $X_j = X_{j-1} C(\mu,\Gamma)$ for $j = 1,\dots,m-1$ and
\[
	\sum_{j=0}^{m-1} A_j X_j		+	A_m X_{m-1} C(\mu,\Gamma)	=	0.
\]
By eliminating $X_j$ for $j=0,\dots,m-1$ in the last equation using $X_j = X_0 C(\mu,\Gamma)^j$ 
we obtain
\[
	\sum_{j=0}^{m} A_j X_0 C(\mu,\Gamma)^j	  =		0.
\]
To summarize $X_0$ is a solution of
$
	\sum_{j=0}^{m} A_j X C(\mu,\Gamma)^j	  =		0	
$
if and only if
\[
	{\mathcal X}_0
		=
		\left[
			\begin{array}{cccc}
				X_0^T	&	\left( X_0 C(\mu,\Gamma)\right)^T	&	\dots		&	\left( X_{0} C^{m-1}(\mu,\Gamma) \right)^T	
			\end{array}
		\right]^T
\]
is a solution of
$
	{\mathcal A}{\mathcal X}	+	{\mathcal B}{\mathcal X}C(\mu,\Gamma)	=	0
$
and the result follows.
\end{proof}

As discussed in the introduction a pair $(X, C(\mu,\Gamma))$ satisfying the Sylvester equation
\[
		\sum_{j=0}^m 	A_jXC^j(\mu,\Gamma)	=	0
\]
is called an invariant pair of the matrix polynomial $P(\lambda)$ \cite{Betcke2011}.
When $C(\mu,\Gamma)$ is diagonal, it can trivially be verified that $P(\mu_j) x_j = 0$ 
for $j = 1,\dots, r$ where $x_j$ denotes the $j$th column of $X$, so the columns of $X$ are 
eigenvectors of $P$. Another special case is a Jordan pair when $C(\mu,\Gamma)$ is in the Jordan form.
In this case, it can be shown that the columns of $X$ are Jordan chains of $P$ \cite{Gohberg1982}.
Thus, a matrix $X$ satisfying the Sylvester equation above is inherently related to the generalized
eigenspaces of $P$. The dimension of all such $X$ is related to the dimensions of the generalized
eigenspaces as revealed by Theorem \ref{thm:Syl_Polynomial}. Next we express the Sylvester characterization 
in Theorem \ref{thm:Syl_Polynomial} as a rank condition involving matrices in terms of the
Kronecker product $\otimes$.

\begin{corollary}\label{cor:rank_Polynomial}
	Let $P(\lambda) := \sum_{j=0}^m \lambda^j A_j$ with $A_j \in {\mathbb C}^{n\times n}$ and such that
	${\rm rank}(A_m) = n$, ${\mathbb S}	:=	\{	\lambda_1, \dots, \lambda_s 	\}$ be a set of complex 
	scalars, and $r\in {\mathbb Z}^+$. The following two conditions are equivalent.
	\begin{enumerate}
		\item[\bf (1)]	$ \sum_{j=1}^s m_j (P)	\geq 		r$ where $m_j(P)$ is the algebraic multiplicity of
		$\lambda_j$ as an eigenvalue $P(\lambda)$.
		\item[\bf (2)]	There exists a $\mu \in {\mathbb S}^r$ such that for all $\Gamma \in {\mathcal G}(\mu)$
				\[
					{\rm rank}
					\left(
							\sum_{j=0}^m 	\left( C^j(\mu,\Gamma) \right)^T \otimes A_j
					\right)	\leq n\cdot r -r.
				\]
	\end{enumerate}
\end{corollary}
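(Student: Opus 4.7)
The plan is to reduce the corollary to Theorem \ref{thm:Syl_Polynomial} by vectorizing the Sylvester equation. Since that theorem already equates condition (1) with the existence of a $\mu \in \mathbb{S}^r$ such that, for all $\Gamma \in \mathcal{G}(\mu)$, the solution space
\[
    \mathcal{N}(\mu,\Gamma) := \left\{X \in \mathbb{C}^{n\times r} \;\middle|\; \sum_{j=0}^m A_j X C^j(\mu,\Gamma) = 0\right\}
\]
has dimension at least $r$, it suffices to show that $\dim \mathcal{N}(\mu,\Gamma) \geq r$ is equivalent to the stated rank bound.

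The key tool is the standard Kronecker identity $\operatorname{vec}(AXB) = (B^T \otimes A)\operatorname{vec}(X)$. Applying it term by term to the Sylvester equation turns ``$X \in \mathcal{N}(\mu,\Gamma)$'' into ``$\operatorname{vec}(X) \in \ker M(\mu,\Gamma)$'', where
\[
    M(\mu,\Gamma) := \sum_{j=0}^m \bigl(C^j(\mu,\Gamma)\bigr)^T \otimes A_j \;\in\; \mathbb{C}^{nr \times nr}.
\]
Because $\operatorname{vec}$ is a linear isomorphism from $\mathbb{C}^{n\times r}$ onto $\mathbb{C}^{nr}$, one has $\dim \mathcal{N}(\mu,\Gamma) = \dim \ker M(\mu,\Gamma)$. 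The rank-nullity theorem applied to $M(\mu,\Gamma)$, which is a square $nr\times nr$ matrix, then gives
\[
    \dim \mathcal{N}(\mu,\Gamma) \geq r \;\;\Longleftrightarrow\;\; nr - \operatorname{rank} M(\mu,\Gamma) \geq r \;\;\Longleftrightarrow\;\; \operatorname{rank} M(\mu,\Gamma) \leq nr - r.
\]
Chaining this equivalence with Theorem \ref{thm:Syl_Polynomial} yields the corollary.

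There is no real obstacle here; the only care needed is to verify that the sum over $j$ commutes properly with the vectorization (which follows from linearity of $\operatorname{vec}$ and of the Kronecker product in each factor) and to note that $M(\mu,\Gamma)$ is square, so that rank-nullity translates directly into the stated upper bound $nr - r$ on the rank.
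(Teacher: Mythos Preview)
Your proposal is correct and follows essentially the same approach as the paper: apply the identity $\operatorname{vec}(AXB)=(B^T\otimes A)\operatorname{vec}(X)$ to the Sylvester equation in Theorem~\ref{thm:Syl_Polynomial} and convert the dimension condition on the solution space into the stated rank bound via rank--nullity. The only difference is that you spell out the rank--nullity step and the fact that $\operatorname{vec}$ is a linear isomorphism, which the paper leaves implicit.
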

\begin{proof}
By reserving the notation ${\rm vec}(\cdot)$ for the linear operator that stacks up the columns 
of its matrix argument into a vector, the result follows from Theorem \ref{thm:Syl_Polynomial}
and the identity
\[
	{\rm vec} (AXB) =  \left( B^T \otimes A \right) {\rm vec}(X),
\]
specifically from an application of the identity above to
$
	\sum_{j=0}^m 	A_jXC^j(\mu,\Gamma)	=	0.
$
\end{proof}

For instance we deduce the following when ${\mathcal S} = \{ \mu \}$ and $r=2$ from
the corollary above; the matrix polynomial $P(\lambda)$ has $\mu$ as a multiple eigenvalue 
if and only if
\[
	{\rm rank}
	\left(
			\sum_{j=0}^m
				\left[
					\begin{array}{cc}	
						\mu	&	0 \\
						\gamma	 &	\mu \\
					\end{array}
				\right]^j
					\otimes 
			A_j
	\right)
			=
	{\rm rank}
	\left(
		\left[
			\begin{array}{cc}
				P(\mu)	&	0	\\
				\gamma P'(\mu)	&	P(\mu)	\\
			\end{array}
		\right]
	\right)	\leq 2n - 2	
\]
for all $\gamma \neq 0$.

\section{ Derivation of the Singular Value Formula }\label{sec:optimal_pert}

For each $\mu \in {\mathbb S}^r$ let us define the quantity
\[
	{\mathcal P}_r(\mu)	:=	
				\inf
				\left\{
						\| \Delta  \|_2	\;\;  |  \;\;
							{\rm rank}
								\left(
									{\mathcal Q}(\mu,\Gamma,P+\Delta)
								\right)  \leq n\cdot r -r
				\right\}
\]
for any $\Gamma \in {\mathcal G}(\mu)$, where
\begin{equation}\label{eq:matrix_fun}
	{\mathcal Q}(\mu,\Gamma,P) := \sum_{j=0}^m 	\left( C^j(\mu,\Gamma) \right)^T \otimes A_j,
\end{equation}
and $P_\Delta := P + \Delta$ denotes the polynomial $P_\Delta(\lambda) := P(\lambda) + \Delta$.
Then, from Corollary \ref{cor:rank_Polynomial}, the distance to a nearest polynomial with 
specified eigenvalues could be expressed as
\[
		\tau_r({\mathbb S})
			:=
		\inf_{\mu \in {\mathbb S}^r}
				{\mathcal P}_r(\mu),
\]
so it suffices to derive a singular value formula for ${\mathcal P}_r(\mu)$.

We immediately deduce the lower bound
\begin{equation}\label{eq:defn_sval}
	{\mathcal P}_r(\mu)
			\geq
	\sup_{\Gamma \in {\mathbb C}^{r(r-1)/2}} \;
	\sigma_{-r} 
		\left(
			{\mathcal Q}(\mu,\Gamma,P)
		\right)
			:=
	\kappa_r(\mu),
\end{equation}
since for any matrix $B$ the distance in 2-norm to a nearest matrix of rank $\ell$
is given by $\sigma_{\ell+1}(B)$ by the Eckart-Young theorem. Here and elsewhere
$\sigma_{-k}(\cdot)$ denotes the $k$th smallest singular value of its matrix argument.
Note that, when deducing the lower bound in (\ref{eq:defn_sval}), we also benefit from 
the continuity of 
		$\sigma_{-r} 
		\left(
			{\mathcal Q}(\mu,\Gamma,P)
		\right)
		$
with respect to $\Gamma$, as well as the genericity of the set ${\mathcal G}(\mu)$,
so that the supremum is over all $\Gamma \in {\mathbb C}^{r(r-1)/2}$ rather than
$\Gamma \in {\mathcal G}(\mu)$. We could not immediately 
deduce the upper bound because the allowable perturbations have special structure, 
i.e., they are of the form $I \otimes \Delta$.

To establish the validity of the reverse inequality ${\mathcal P}_r(\mu) \leq \kappa_r(\mu)$
it is sufficient to construct a perturbation $\Delta_{\ast}$ such that
\begin{enumerate}
	\item[\bf (i)]	$\| \Delta_{\ast} \|_2 = \kappa_r(\mu)$, and
	\item[\bf (ii)]	${\rm rank}
					\left(
									{\mathcal Q}(\mu,\Gamma,P + \Delta_\ast)
					\right)  \leq n\cdot r -r $
				for some $\Gamma\in {\mathcal G}(\mu)$.
\end{enumerate}
As shown in the appendix the supremum in (\ref{eq:defn_sval}) is attained for generic $\mu$. 
For such a generic $\mu$, let $\Gamma_\ast$ be a point where this supremum is attained, that is
\begin{equation}\label{eq:defn_gammaast}
	\kappa_r(\mu)
				=
	\sigma_{-r} 
		\left(
			{\mathcal Q}(\mu,\Gamma_\ast,P)
		\right).
\end{equation}
Let $U, V \in {\mathbb C}^{nr}$ be a consistent pair of unit left and right singular 
vectors associated with this singular value, in particular $U$ and $V$ 
satisfy
\begin{equation}\label{eq:opt_sval1}
		{\mathcal Q}(\mu,\Gamma_\ast,P) \cdot	
			V
						=
		\kappa_r(\mu)	\cdot U
\end{equation}
and
\begin{equation}\label{eq:opt_sval2}
		U^\ast \cdot
		{\mathcal Q}(\mu,\Gamma_\ast,P)
						=
		\kappa_r(\mu) \cdot V^\ast.
\end{equation}
In the subsequent two subsections we prove that 
\begin{equation}\label{eq:optimal_perturb}
	\Delta_\ast
			:=
		-\kappa(P,\mu) {\mathcal U} {\mathcal V}^+
\end{equation}
satisfies both of the properties \textbf{(i)} and \textbf{(ii)} above, where ${\mathcal U}, {\mathcal V} \in {\mathbb C}^{n\times r}$
are such that $U = {\rm vec}({\mathcal U})$ and $V = {\rm vec}({\mathcal V})$ under the following mild
assumptions.
\begin{enumerate}
	\item \textbf{(Multiplicity Assumption)} The multiplicity of
	$
		\sigma_{- r } 
		\left(
			{\mathcal Q}(\mu,\Gamma_\ast,P)
		\right)	
	$
	is one.
	\item \textbf{(Linear Independence Assumption)} ${\rm rank}({\mathcal V}) = r$
\end{enumerate}

\subsection{ Norm of $\Delta_\ast$ }\label{sec:norm_cond}


We aim to show that $\| \Delta_\ast \|_2 = \kappa_r\left(\mu \right)$. For this
purpose, it is sufficient to establish the validity of ${\mathcal U}^\ast {\mathcal U} = {\mathcal V}^\ast {\mathcal V}$, since this property
implies
\[
	\| {\mathcal U} {\mathcal V}^+ \|_2 = \max_{w \in \C^n, \; \| w \|_2 = 1} \sqrt{ \left( {\mathcal V}^+ \right)^\ast {\mathcal U}^\ast {\mathcal U} {\mathcal V}^+ w}
			   = \max_{w \in \C^n, \; \| w \|_2 = 1} \sqrt{ \left( {\mathcal V}^+ \right)^\ast {\mathcal V}^\ast {\mathcal V} {\mathcal V}^+ w}
			   = \| {\mathcal V} {\mathcal V}^+ \|_2 = 1,
\] 
where the last equality is due to the fact that ${\mathcal V}{\mathcal V}^+$ is an orthogonal projector.

Throughout the rest of this subsection we prove the property ${\mathcal U}^\ast {\mathcal U} = {\mathcal V}^\ast {\mathcal V}$ under the 
multiplicity assumption. Let
\begin{equation*}
\sigma(\Gamma):=\sigma_{ -r }({\mathcal Q}(\mu, \Gamma,P))
\end{equation*}
Then the partial derivatives of ${\mathcal Q}(\mu, \Gamma,P)$ with respect to the real
and the imaginary parts of the components $\gamma_{ik}$ of $\Gamma$ are 
\begin{eqnarray*}
\frac{\partial {\mathcal Q}}{\partial \Re\gamma_{ik}}(\mu,\Gamma,P)
&=&
\sum_{j=1}^m \left(\sum_{\ell=0}^{j-1}
 C^{\ell}(\mu,\Gamma)
\frac{\partial C(\mu,\Gamma)} {\partial \Re\gamma_{ik}}
C^{j-1-\ell}(\mu,\Gamma)\right)^T
 \otimes A_j
\\
&=&
\sum_{j=1}^m \sum_{\ell=0}^{j-1}
\left( C^{\ell}(\mu,\Gamma)
e_ie_k^\top 
C^{j-1-\ell}(\mu,\Gamma)\right)^T
 \otimes A_j,
\\
\frac{\partial {\mathcal Q}}{\partial \Im\gamma_{ik}}(\mu,\Gamma,P)
&=&
\sum_{j=1}^m \left(\sum_{\ell=0}^{j-1}
C^{\ell}(\mu,\Gamma)
\frac{\partial C(\mu,\Gamma)} {\partial \Im\gamma_{ik}}
C^{j-1-\ell}(\mu,\Gamma)\right)^T
 \otimes A_j\\
&=&
{\imath}\sum_{j=1}^m \sum_{\ell=0}^{j-1}
\left( C^{\ell}(\mu,\Gamma)
e_ie_k^\top 
C^{j-1-\ell}(\mu,\Gamma)\right)^T
 \otimes A_j,
\end{eqnarray*}
where $e_i$ ($e_k$) denotes the $i$th (the $k$th)  column of the
$r\times r$ unit matrix, and $1\leq i<k\leq r$.
Let
\begin{equation}
G:=
\sum_{j=1}^m \sum_{\ell=0}^{j-1}C^{j-1-\ell}(\mu,\Gamma_\ast)
{\mathcal U}^\ast A_j\,{\mathcal V}\,
 C^{\ell}(\mu,\Gamma_\ast).
\end{equation}
 From the assumption that the singular value $\sigma(\Gamma_\ast)$ is 
simple it follows that the function $\Gamma \mapsto \sigma(\Gamma)$
is analytic at $\Gamma_\ast$, and  
\begin{eqnarray*}
0 &=&
\frac{\partial \sigma} {\partial \Re\gamma_{ik}}(\Gamma_\ast)\\
&=&
\Re \left( U^\ast \,
\frac{\partial {\mathcal Q}}{\partial \Re\gamma_{ik}}(\mu,\Gamma_\ast,P)
V
\right)\\
&=&
\Re \left({\rm vec}({\mathcal U})^\ast \,
\frac{\partial {\mathcal Q}}{\partial \Re\gamma_{ik}}(\mu,\Gamma_\ast,P)
{\rm vec}({\mathcal V})
\right)\\
&=&
\Re \left({\rm vec}({\mathcal U})^\ast \,
{\rm vec}
\left(
\sum_{j=1}^m \sum_{\ell=0}^{j-1}
A_j\,{\mathcal V}\,
 C^{\ell}(\mu,\Gamma_\ast)
e_ie_k^\top 
C^{j-1-\ell}(\mu,\Gamma_\ast)
\right)\right)
\\
&=&
\Re\, \left( {\rm tr}\left(
{\mathcal U}^\ast
\sum_{j=1}^m \sum_{\ell=0}^{j-1}
A_j\,{\mathcal V}\,
 C^{\ell}(\mu,\Gamma_\ast)
e_ie_k^\top 
C^{j-1-\ell}(\mu,\Gamma_\ast)
\right) \right)
\\
&=&
\Re\,\big(\,
e_k^\top 
G
e_i\big)\qquad\qquad\text{for } 1\leq i<k\leq r.
\end{eqnarray*}  
The latter equation follows from the trace identity  ${\rm
  tr}(XY)={\rm tr}(YX)$. Analogously we have
\begin{equation*}
0=\frac{\partial \sigma} {\partial \Im\gamma_{ik}}(\Gamma_\ast)
= \Re ({\imath}  e_k^T Ge_i) \; = \; -\Im (e_k^T Ge_i)\qquad \text{for } 1\leq k<i\leq r.
\end{equation*}
Thus, $G$ is upper triangular. Let 
\begin{equation}
M=-{\mathcal U}^\ast A_0{\mathcal V}+\sum_{j=1}^m\sum_{\ell=1}^{j-1} C^{j-\ell}(\mu,\Gamma_\ast){\mathcal U}^\ast A_j{\mathcal V}C^{\ell}(\mu,\Gamma_\ast).
\end{equation}
Then, it is easily verified that
\begin{eqnarray*}
G\, C(\mu,\Gamma_\ast) &=&M+ {\mathcal U}^\ast\sum_{j=0}^m A_j{\mathcal V}C^j(\mu,\Gamma_\ast)\\
&=&M+\sigma(\Gamma_\ast)U^\ast U,
\end{eqnarray*}
where the last equality follows by writing (\ref{eq:opt_sval1}) in matrix form. Also,
\begin{eqnarray*}
C(\mu,\Gamma_\ast)\,G &=&
M+ \left(\sum_{j=0}^mC^j(\mu,\Gamma_\ast){\mathcal U}^\ast A_j\right){\mathcal V}\\
&=&M+\sigma(\Gamma_\ast){\mathcal V}^\ast {\mathcal V},
\end{eqnarray*}
where the last equality follows from (\ref{eq:opt_sval2}). Thus,
\begin{equation}
\sigma(\Gamma_\ast)({\mathcal U}^\ast {\mathcal U}-{\mathcal V}^\ast {\mathcal V})=GC(\mu,\Gamma_\ast)-C(\mu,\Gamma_\ast)G.
\end{equation}
Since $G$ and $C(\mu,\Gamma_\ast)$ are both upper triangular, the right
hand side of this equation is strictly upper triangular. The
left hand side is Hermitian. Hence, both sides vanish. Thus,
${\mathcal U}^\ast {\mathcal U} = {\mathcal V}^\ast {\mathcal V}$.

\subsection{ Sylvester Equation for Perturbed Matrix Polynomial }\label{sec:rank_suff}

In this subsection we show that
\begin{equation}\label{eq:Sylvester}
	{\rm dim}
		\left\{
			X\in {\mathbb C}^{n\times r}	\;\;
				|	\;\;
			\sum_{j=0}^m 	A_jXC^j(\mu,\Gamma_\ast) + \Delta_\ast X		=	0
		\right\}	\geq r
\end{equation}
under the assumption that ${\mathcal V}$ is full rank, where $\Gamma_\ast$ is defined as in (\ref{eq:defn_gammaast}). 
This is equivalent to the satisfaction of the condition
\[
		{\rm rank}
					\left(
							{\mathcal Q}(\mu,\Gamma_\ast,P+\Delta_\ast)
					\right)  \leq n\cdot r -r .
\]
Our starting point is the singular value equation (\ref{eq:opt_sval1}), which could be rewritten 
as a matrix equation of the form
\[
	\sum_{j=0}^m 	A_j {\mathcal V} C^j(\mu,\Gamma_\ast)	=	\kappa_r(\mu) {\mathcal U}.
\]

Assuming ${\mathcal V}$ is full rank we have ${\mathcal V}^+ {\mathcal V} = I$. Consequently,
\[
	\sum_{j=0}^m 	A_j {\mathcal V} C^j(\mu,\Gamma_\ast)	=	\kappa(P,\mu) {\mathcal U} {\mathcal V}^+ {\mathcal V} \;\;		\Longrightarrow	\;\;
	\sum_{j=0}^m 	A_j {\mathcal V} C^j(\mu,\Gamma_\ast) +  \Delta_\ast {\mathcal V}	=	0.
\]
Moreover, consider the subspace of matrices
\[
	{\mathcal D}
		:=
	\{
		D\in {\mathbb C}^{r\times r}	\;\; |  \;\;
		C(\mu,\Gamma_\ast) D - D C(\mu,\Gamma_\ast) = 0
	\}
\]
commuting with $C(\mu,\Gamma_\ast)$, which is of dimension at least $r$ (due to \cite[Theorem 1, p. 219]{Gantmacher1959}). 
For all $D \in {\mathcal D}$, we have
\[
	0 =
	\sum_{j=0}^m 	A_j {\mathcal V} C^j(\mu,\Gamma_\ast)D + \Delta_\ast {\mathcal V}D	=
	\sum_{j=0}^m 	A_j ({\mathcal V}D) C^j(\mu,\Gamma_\ast) +  \Delta_\ast ({\mathcal V}D)
\]
meaning each matrix in the set
$
	\{
		{\mathcal V}D	\;\; |  \;\;
		D \in {\mathcal D}
	\}
$
is a solution of the Sylvester equation 
\[
	\sum_{j=0}^m 	A_jXC^j(\mu,\Gamma_\ast) + \Delta_\ast X		=	0.
\]
Therefore, we conclude with (\ref{eq:Sylvester}) assuming ${\mathcal V}$ is full rank.

\subsection{ Main Result }\label{subsec:main_result}

Let us first suppose $\mu$ consists of distinct scalars. Then all eigenvalues of $C(\mu,\Gamma)$ 
have algebraic and geometric multiplicities equal to one for all $\Gamma$, implying
${\mathcal G}(\mu) = {\mathbb C}^{r(r-1)/2}$. Consequently, we have $\Gamma_\ast \in {\mathcal G}(\mu)$,
where $\Gamma_\ast$ is defined as in (\ref{eq:defn_gammaast}). Furthermore, let us suppose
that $\mu$ takes one of those generic values (specifically $\mu$ satisfies the hypotheses of
Theorem \ref{thm:decay_polynomial}) so that the supremum in (\ref{eq:defn_sval})
is attained. It follows from Sections \ref{sec:norm_cond} and \ref{sec:rank_suff} that ${\mathcal P}_r(\mu) = \kappa_r(\mu)$ 
under multiplicity and linear independence assumptions at the optimal $\Gamma_\ast$.

When there are repeated scalars in $\mu$ or $\mu$ does not take one of the generic values, then there are $\tilde{\mu}$ 
comprised of distinct scalars, that belong to the generic set and arbitrarily close to $\mu$, where the equality 
${\mathcal P}_r(\tilde{\mu}) = \kappa_r(\tilde{\mu})$ is satisfied under multiplicity and linear independence assumptions. 
Then, the equality ${\mathcal P}_r(\mu) = \kappa_r(\mu)$ follows from the continuity of both ${\mathcal P}_r(\cdot)$ and $\kappa_r(\cdot)$ 
with respect to $\mu$ (again under multiplicity and linear independence assumptions). We arrive at the following main 
result of this paper.

\begin{theorem}[\textbf{Distance to Polynomials with Specified Eigenvalues}]\label{thm:main_result}
	Let $P(\lambda) := \sum_{j=0}^m \lambda^j A_j$ with $A_j \in {\mathbb C}^{n\times n}$ and such that
	${\rm rank}(A_m) = n$, ${\mathbb S}	:=	\{	\lambda_1, \dots, \lambda_s 	\}$ be a set of complex 
	scalars, and $r\in {\mathbb Z}^+$. 
	\begin{enumerate}
	\item[\bf (i)]
	Then the singular value characterization
	\begin{equation}\label{eq:main_sval_char}
		\tau_r({\mathbb S})
				=
		\inf_{\mu \in {\mathbb S}^r} \sup_{\Gamma\in {\mathbb C}^{r(r-1)/2}} \;
				\sigma_{- r}
					\left(
						{\mathcal Q}(\mu,\Gamma,P)
					\right)
	\end{equation}
	holds, for the distance $\tau_r({\mathbb S})$ defined as in (\ref{eq:distance_defn}) in terms of
	the matrix function ${\mathcal Q}(\mu,\Gamma,P)$ defined as in (\ref{eq:matrix_fun}), provided that the
	multiplicity and linear independence assumptions hold at the optimal $\mu \in {\mathbb S}^r$
	and $\Gamma \in {\mathbb C}^{r(r-1)/2}$, and if $r > n$ provided that the inner supremum is attained.
	\item[\bf (ii)]
	The minimal $\Delta_\ast$ in 2-norm such that $\sum_{j=1}^s m_j \left( P + \Delta_\ast \right) \geq r$ 
	is given by (\ref{eq:optimal_perturb}), but for a specific $\mu$ where the outer infimum 
	in (\ref{eq:main_sval_char}) is attained.
	\end{enumerate}
\end{theorem}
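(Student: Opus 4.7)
The plan is to assemble the pieces already developed in Section~\ref{sec:optimal_pert}. Starting from the definition of $\tau_r(\mathbb{S})$ and applying Corollary~\ref{cor:rank_Polynomial} to the perturbed polynomial $P+\Delta$, the condition $\sum_{j=1}^s m_j(P+\Delta)\geq r$ becomes the existence of some $\mu\in\mathbb{S}^r$ such that $\mathrm{rank}(\mathcal{Q}(\mu,\Gamma,P+\Delta))\leq nr-r$ for all $\Gamma\in\mathcal{G}(\mu)$. Interchanging the infimum over $\Delta$ with the existential on $\mu$ gives $\tau_r(\mathbb{S})=\inf_{\mu\in\mathbb{S}^r}\mathcal{P}_r(\mu)$, reducing the theorem to proving $\mathcal{P}_r(\mu)=\kappa_r(\mu)$; the replacement of the constraint $\Gamma\in\mathcal{G}(\mu)$ by an unrestricted supremum over $\mathbb{C}^{r(r-1)/2}$ is then justified by density of $\mathcal{G}(\mu)$ and continuity of $\sigma_{-r}(\mathcal{Q}(\mu,\cdot,P))$.

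The lower bound $\mathcal{P}_r(\mu)\geq\kappa_r(\mu)$ follows from the Eckart-Young theorem: the map $\Delta\mapsto I_r\otimes\Delta$ is a 2-norm isometry, so any $\Delta$ that drives $\mathcal{Q}(\mu,\Gamma,P+\Delta)$ to rank at most $nr-r$ must have norm at least $\sigma_{-r}(\mathcal{Q}(\mu,\Gamma,P))$; taking the supremum over $\Gamma$ yields $\kappa_r(\mu)$.

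The reverse inequality requires an explicit construction. Fix a generic $\mu$ for which the supremum defining $\kappa_r(\mu)$ is attained at some $\Gamma_\ast$ (the attainment being guaranteed by the genericity result mentioned before the statement), take a consistent singular-vector pair $U=\mathrm{vec}(\mathcal{U})$, $V=\mathrm{vec}(\mathcal{V})$ satisfying (\ref{eq:opt_sval1})--(\ref{eq:opt_sval2}), and set $\Delta_\ast=-\kappa_r(\mu)\mathcal{U}\mathcal{V}^+$. Two facts are needed. First, $\|\Delta_\ast\|_2=\kappa_r(\mu)$, which reduces to $\mathcal{U}^\ast\mathcal{U}=\mathcal{V}^\ast\mathcal{V}$ (since this identity, combined with $\mathcal{V}\mathcal{V}^+$ being an orthogonal projector, gives $\|\mathcal{U}\mathcal{V}^+\|_2=1$); this identity is established by differentiating $\sigma_{-r}(\mathcal{Q}(\mu,\Gamma,P))$ in the real and imaginary parts of the $\gamma_{ik}$ at $\Gamma_\ast$, using the simplicity of the singular value for analyticity, reorganizing via $\mathrm{tr}(XY)=\mathrm{tr}(YX)$ to show that an explicitly defined matrix $G$ is upper triangular, and then extracting from the singular-value equations the commutator identity $\sigma(\Gamma_\ast)(\mathcal{U}^\ast\mathcal{U}-\mathcal{V}^\ast\mathcal{V})=GC(\mu,\Gamma_\ast)-C(\mu,\Gamma_\ast)G$, whose right-hand side is strictly upper triangular while the left-hand side is Hermitian, forcing both sides to vanish. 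Second, $\mathrm{rank}(\mathcal{Q}(\mu,\Gamma_\ast,P+\Delta_\ast))\leq nr-r$; this comes from writing (\ref{eq:opt_sval1}) as the matrix equation $\sum_j A_j\mathcal{V}C^j(\mu,\Gamma_\ast)=\kappa_r(\mu)\mathcal{U}$, using the linear-independence assumption $\mathcal{V}^+\mathcal{V}=I$ to obtain the Sylvester identity $\sum_j A_j\mathcal{V}C^j(\mu,\Gamma_\ast)+\Delta_\ast\mathcal{V}=0$, and then invoking the Gantmacher dimension bound on the commutant of $C(\mu,\Gamma_\ast)$ to generate $r$ linearly independent solutions of the form $\mathcal{V}D$.

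To remove the distinctness assumption on the entries of $\mu$ and the hypothesis that $\Gamma_\ast$ exists, a standard density argument applies: distinct generic tuples $\tilde\mu$ on which the equality $\mathcal{P}_r(\tilde\mu)=\kappa_r(\tilde\mu)$ is already proved are dense, and both sides are continuous in $\mu$, so equality extends everywhere. Part (ii) of the theorem is then immediate because the $\Delta_\ast$ constructed at the optimal $\mu$ satisfies both the norm equality and the rank bound, hence achieves the infimum $\tau_r(\mathbb{S})$. The main obstacle, where real work is concentrated, is the identity $\mathcal{U}^\ast\mathcal{U}=\mathcal{V}^\ast\mathcal{V}$: without it the constructed $\Delta_\ast$ would have norm exceeding $\kappa_r(\mu)$ and no comparable structured perturbation presents itself. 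The linear-independence assumption on $\mathcal{V}$ is similarly load-bearing for the rank step, since it is what lets $\mathcal{V}^+\mathcal{V}=I$ inject the full commutant of $C(\mu,\Gamma_\ast)$ into the solution space.
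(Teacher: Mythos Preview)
Your proposal is correct and follows the paper's own proof essentially step for step: the reduction $\tau_r(\mathbb{S})=\inf_\mu\mathcal{P}_r(\mu)$ via Corollary~\ref{cor:rank_Polynomial}, the Eckart--Young lower bound, the construction $\Delta_\ast=-\kappa_r(\mu)\mathcal{U}\mathcal{V}^+$, the derivation of $\mathcal{U}^\ast\mathcal{U}=\mathcal{V}^\ast\mathcal{V}$ through the first-order optimality conditions and the commutator identity $\sigma(\Gamma_\ast)(\mathcal{U}^\ast\mathcal{U}-\mathcal{V}^\ast\mathcal{V})=GC-CG$, the rank argument via the Gantmacher commutant bound, and the final density/continuity passage are all exactly what Sections~\ref{sec:norm_cond}--\ref{subsec:main_result} do. Your added remark that $\Delta\mapsto I_r\otimes\Delta$ is a 2-norm isometry makes explicit the reason Eckart--Young applies, which the paper leaves implicit.
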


\subsection{Simplified Formula in terms of Divided Differences}\label{subsec:divided_difference}

The singular value characterization (\ref{eq:main_sval_char}) seems cumbersome at first.
It can be expressed in a much more comprehensible way by the use of the divided
differences, and the theorem below regarding the matrix functions of triangular matrices
\cite[Corollary of Theorem 2]{Davis1973}, \cite[Theorem 3]{VanLoan1975}, \cite[Theorem 4.11]{Higham2008}.
Recall that, for a function $f : {\mathbb R} \rightarrow {\mathbb R}$, we define the divided
difference at the nodes $x_0, \dots, x_k \in {\mathbb R}$ - where equal nodes are allowed
but must be contagious (i.e. $x_j = x_\ell$ for $\ell > j$ implies $x_i = x_j$ for all $i \in [j,\ell]$) -
recursively by the formula
\begin{equation}\label{eq:divided_differences}
	f \left[ x_0, x_1, \dots, x_k \right]
				=
	\begin{cases}
			\frac{ f \left[ x_1,\dots, x_k \right] - f\left[ x_0,\dots, x_{k-1} \right] }{ x_k - x_0 }	&	x_0 \neq x_k	\\
			\frac{f^{(k)}(x_0)}{k!}												&	x_0 = x_k
	\end{cases}
\end{equation}
and $f[x_j] = f(x_j)$ for each $j$ \cite{DeBoor2005}, \cite[Section 8.2.1]{Quarteroni2007}, \cite[Section B.16]{Higham2008}.
\begin{theorem}[\textbf{Functions of Triangular Matrices}]\label{thm:fun_tri_matrices}
Let $T$ be an $n\times n$ lower triangular matrix, and $f : {\mathbb R} \rightarrow {\mathbb R}$
be a function defined on the spectrum of $T$. Then ${\mathcal T} := f(T)$ is lower triangular
with ${\mathcal T}_{ii} = f(\mu_i)$ and
\[
	{\mathcal T}_{i\ell}
			=
	\sum_{(s_0, s_1, \dots, s_k)} t_{s_1 s_0} t_{s_2 s_1} \dots t_{s_k s_{k-1} } f\left[ \mu_{s_0}, \dots, \mu_{s_k} \right]
\]
for $i > \ell$, where $\mu_i = t_{ii}$, and the summation is over all increasing sequences of positive integers starting with $\ell$ 
and ending with $i$.
\end{theorem}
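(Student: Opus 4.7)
The plan is to prove the formula first for the power functions $f(x) = x^p$, extend by linearity to all polynomials, and finally invoke the standard definition of $f(T)$ through Hermite interpolation on the spectrum of $T$ to reach an arbitrary $f$ defined on that spectrum. Only the monomial case carries content.

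For $f(x) = x^p$, I would start from the path expansion
\begin{equation*}
	(T^p)_{i\ell}
		=
	\sum_{j_1, \ldots, j_{p-1}} t_{i, j_{p-1}} \, t_{j_{p-1}, j_{p-2}} \cdots t_{j_2, j_1} \, t_{j_1, \ell}
\end{equation*}
and observe that lower-triangularity of $T$ forces every nonzero term to correspond to a weakly increasing chain $\ell = j_0 \leq j_1 \leq \cdots \leq j_p = i$. I would then regroup the sum by the list of distinct values visited, say $\ell = s_0 < s_1 < \cdots < s_k = i$: each of the $p$ factors is either an off-diagonal transition producing $t_{s_{b+1}, s_b}$ or a self-loop producing $\mu_{s_b} = t_{s_b, s_b}$; the $k$ transitions are forced once the distinct-value sequence is fixed, and the remaining $p-k$ self-loops distribute freely over the $k+1$ values. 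Summing over all nonnegative compositions $m_0 + \cdots + m_k = p-k$ collapses the self-loop products into a complete homogeneous symmetric polynomial, giving
\begin{equation*}
	(T^p)_{i\ell}
		=
	\sum_{k \geq 1} \; \sum_{\ell = s_0 < \cdots < s_k = i} t_{s_1 s_0} \, t_{s_2 s_1} \cdots t_{s_k s_{k-1}} \; h_{p-k}(\mu_{s_0}, \ldots, \mu_{s_k}),
\end{equation*}
where $h_j$ denotes the complete homogeneous symmetric polynomial of degree $j$ (zero for $j<0$).

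To finish the monomial step, I would invoke the classical identity $h_{p-k}(\mu_{s_0}, \ldots, \mu_{s_k}) = x^p[\mu_{s_0}, \ldots, \mu_{s_k}]$, which holds as stated even when the $\mu_{s_a}$ contain repetitions, provided divided differences are interpreted via (\ref{eq:divided_differences}). This matches the right-hand side of the displayed equation to the claimed formula for $f(x) = x^p$. Linearity in $f$ then extends the identity to arbitrary polynomials, and since both sides depend on $f$ only through its values and derivatives on the spectrum of $T$, the standard Hermite-interpolation-based definition of $f(T)$ delivers the formula for any $f$ defined on that spectrum.

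The main technical point I expect to have to handle carefully is the case in which several $\mu_{s_a}$ coincide even though the indices $s_a$ themselves are all distinct: then $h_{p-k}$ must be evaluated with repeated arguments and identified with a divided difference containing derivative-type entries, rather than the simpler first-order form in (\ref{eq:divided_differences}). This is classical but is the one place where care is needed to ensure that the proof covers triangular matrices with repeated diagonal entries, which is exactly the regime that matters for the application to $\mathcal{Q}(\mu,\Gamma,P)$.
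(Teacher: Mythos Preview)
The paper does not supply its own proof of this theorem; it is quoted as a known result with references to Davis (1973), Van Loan (1975), and Higham's \emph{Functions of Matrices} (Theorem~4.11). Your argument is correct and is essentially the standard combinatorial proof that appears in those sources: expand $(T^p)_{i\ell}$ as a sum over length-$p$ weakly increasing index paths, group the paths by the strictly increasing sequence $\ell=s_0<\cdots<s_k=i$ of distinct indices visited (which indeed forces exactly the $k$ off-diagonal factors $t_{s_1 s_0}\cdots t_{s_k s_{k-1}}$, since a weakly increasing path visiting $s_{a+1}$ cannot skip it), recognise the remaining sum over the self-loop multiplicities as $h_{p-k}(\mu_{s_0},\ldots,\mu_{s_k})$, and use the classical identity $h_{p-k}(\mu_{s_0},\ldots,\mu_{s_k})=x^p[\mu_{s_0},\ldots,\mu_{s_k}]$, valid with repeated arguments by polynomial dependence on the nodes. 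The passage from polynomials to general $f$ via Hermite interpolation is the standard one and matches the definition of $f(T)$ used in the cited references.

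So there is nothing to compare against in the paper itself; your proof is sound and coincides with the literature proof the authors invoke.
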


Now, letting $p_j(x) = x^j$, the formula in (\ref{eq:main_sval_char}) concerns the optimization
of the $r$th smallest singular value of
\[
	{\mathcal Q}(\mu,\Gamma,P)	  =	 \sum_{j=0}^m \; p_j \left(  C(\mu,\Gamma)^T \right)   \otimes A_j.
\]
Partition ${\mathcal Q}(\mu,\Gamma,P)$ into $n\times n$ blocks, then by an application of Theorem
\ref{thm:fun_tri_matrices} for $i > \ell$, its $n\times n$ submatrix at the $i$th block row and $\ell$th block
column is given by
\begin{eqnarray*}
	\displaystyle
		\sum_{j=0}^m 	\left( p_j \left(  C(\mu,\Gamma)^T \right) \right)_{i\ell} A_j
			& =
		\displaystyle
		\;\;
		\sum_{j=0}^m
		\sum_{(s_0, s_1, \dots, s_k)} \gamma_{s_1 s_0} \gamma_{s_2 s_1} \dots \gamma_{s_k s_{k-1} } p_j\left[ \mu_{s_0}, \dots, \mu_{s_k} \right]    A_j \hskip 4.5ex \\
			& =
		\displaystyle
		\sum_{(s_0, s_1, \dots, s_k)} \gamma_{s_1 s_0} \gamma_{s_2 s_1} \dots \gamma_{s_k s_{k-1} } \left( \sum_{j=0}^m p_j\left[ \mu_{s_0}, \dots, \mu_{s_k} \right]    A_j \right) \\
			& =
		\displaystyle
		\sum_{(s_0, s_1, \dots, s_k)} \gamma_{s_1 s_0} \gamma_{s_2 s_1} \dots \gamma_{s_k s_{k-1} } P \left[ \mu_{s_0}, \dots, \mu_{s_k} \right]    \hskip 12ex
\end{eqnarray*}
where we define $P \left[ \mu_{s_0}, \dots, \mu_{s_k} \right]$ by the divided difference formula (\ref{eq:divided_differences})
by replacing $f$ with the matrix polynomial $P$. On the other hand the $n\times n$ submatrix of 
${\mathcal Q}(\mu,\Gamma,P)$ at the $i$th block row and column is given by
\[
	\sum_{j=0}^m 	\left( p_j \left(  C(\mu,\Gamma)^T \right) \right)_{ii} A_j
				=
	\sum_{j=0}^m 	\mu_i^j A_j 
				=
				P(\mu_i).
\]

\begin{theorem}[\textbf{Divided Difference Characterization}]\label{thm:main_result2}
	Let $P(\lambda) := \sum_{j=0}^m \lambda^j A_j$ with $A_j \in {\mathbb C}^{n\times n}$ and such that
	${\rm rank}(A_m) = n$, ${\mathbb S}	:=	\{	\lambda_1, \dots, \lambda_s 	\}$ be a set of complex 
	scalars, and $r\in {\mathbb Z}^+$. Then the singular value characterization
	\begin{equation}\label{eq:main_sval_char2}
		\tau_r({\mathbb S})
				=
		\inf_{\mu \in {\mathbb S}^r} \sup_{\Gamma\in {\mathbb C}^{r(r-1)/2}} \;
				\sigma_{- r}
					\left(
					 	{\mathcal Q}(\mu,\Gamma,P)
					\right)
	\end{equation}
	holds, for the distance $\tau_r({\mathbb S})$ defined as in (\ref{eq:distance_defn}), provided that the
	multiplicity and linear independence assumptions hold at the optimal $\mu \in {\mathbb S}^r$
	and $\Gamma \in {\mathbb C}^{r(r-1)/2}$, and if $r > n$ provided that the inner supremum is attained,
	where ${\mathcal Q}(\mu,\Gamma,P) \in {\mathbb C}^{nr\times nr}$
	is block lower triangular whose $n\times n$ submatrix at rows $1+(i-1)n : in$ and at columns $1+(\ell-1)n : \ell n$ is 
	given by
	\[
		\begin{cases}
		\sum_{(s_0, s_1, \dots, s_k)} \gamma_{s_1 s_0} \gamma_{s_2 s_1} \dots \gamma_{s_k s_{k-1} } P \left[ \mu_{s_0}, \dots, \mu_{s_k} \right]  & i > \ell \\
		P(\mu_i)	&	i = \ell \\
		0		&	i < \ell
		\end{cases}
	\] 
	with summation over all positive increasing sequences starting with $\ell$ and ending with $i$.
\end{theorem}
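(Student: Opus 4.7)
The plan is to obtain this theorem as an essentially immediate corollary of Theorem \ref{thm:main_result}: since the singular value characterization
\[
\tau_r(\mathbb{S}) = \inf_{\mu\in\mathbb{S}^r}\sup_{\Gamma\in\mathbb{C}^{r(r-1)/2}}\sigma_{-r}(\mathcal{Q}(\mu,\Gamma,P))
\]
is already established there with $\mathcal{Q}(\mu,\Gamma,P) = \sum_{j=0}^m (C^j(\mu,\Gamma))^T \otimes A_j$, all that remains is to verify that the $n\times n$ block entries of this matrix have the divided-difference form asserted.

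To this end I would set $T := C(\mu,\Gamma)^T$, which is lower triangular with $T_{ii}=\mu_i$ and subdiagonal entries formed from the $\gamma_{ij}$'s. For each $j$, Theorem \ref{thm:fun_tri_matrices} applied with $f=p_j$, where $p_j(x)=x^j$, expresses the $(i,\ell)$ scalar entry of $p_j(T)$, for $i>\ell$, as a sum over increasing index sequences $(s_0,\ldots,s_k)$ from $\ell$ to $i$ of products of $\gamma$'s weighted by $p_j[\mu_{s_0},\ldots,\mu_{s_k}]$. The Kronecker-product definition then makes the $(i,\ell)$ block of $\mathcal{Q}(\mu,\Gamma,P)$ equal to $\sum_{j=0}^m (p_j(T))_{i\ell}\,A_j$. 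Swapping the order of summation and using linearity of the divided-difference operator in the scalar function to collapse
\[
\sum_{j=0}^m p_j[\mu_{s_0},\ldots,\mu_{s_k}]\,A_j = P[\mu_{s_0},\ldots,\mu_{s_k}]
\]
yields exactly the entry claimed. The diagonal blocks evaluate to $P(\mu_i)$, and the superdiagonal blocks vanish since $T$ is lower triangular and hence so is every $p_j(T)$.

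The only technicality I anticipate is that Theorem \ref{thm:fun_tri_matrices} and the divided-difference recursion (\ref{eq:divided_differences}) are stated over $\mathbb{R}$, whereas here the nodes $\mu_{s_i}$ are complex and possibly equal. The recursion extends verbatim to complex arguments, and although (\ref{eq:divided_differences}) requires coincident nodes to be contiguous, divided differences are symmetric in their arguments, so any tuple may be reordered to satisfy the contiguity requirement without altering its value. I do not expect any obstacle beyond this bookkeeping; the substantive computation is precisely the block-entry derivation already sketched in the paragraph preceding the theorem statement, which a formal proof would simply organize into three lines invoking Theorem \ref{thm:main_result}, Theorem \ref{thm:fun_tri_matrices}, and linearity of divided differences.
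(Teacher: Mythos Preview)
Your proposal is correct and follows essentially the same approach as the paper: the paper derives Theorem~\ref{thm:main_result2} from Theorem~\ref{thm:main_result} by applying Theorem~\ref{thm:fun_tri_matrices} to $p_j(C(\mu,\Gamma)^T)$, then swapping the order of summation and using linearity to collapse $\sum_{j=0}^m p_j[\mu_{s_0},\dots,\mu_{s_k}]\,A_j$ into $P[\mu_{s_0},\dots,\mu_{s_k}]$, exactly as you outline. The paper does not explicitly address the real-versus-complex or contiguity technicalities you flag, so your proposal is if anything slightly more careful on that point.
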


The min-max characterization in (\ref{eq:main_sval_char2}) takes the form
\[
	\inf_{\mu_1, \mu_2 \in {\mathcal S}} \;\;
	\sup_{\gamma \in {\mathbb C}} \;
	\sigma_{-2}
		\left(
		\left[
			\begin{array}{cc}
				P(\mu_1)	  				&	0	\\
				\gamma P[\mu_1,\mu_2]		&	P(\mu_2)	\\
			\end{array}
		\right]
		\right)
\]
for the particular case $r = 2$ (i.e., two eigenvalues are prescribed), and
\[
	\inf_{\mu_1, \mu_2, \mu_3 \in {\mathcal S}} \;\;
	\sup_{\gamma_{21}, \gamma_{31}, \gamma_{32} \in {\mathbb C}} \;
	\sigma_{-3}
		\left(
		\left[
			\begin{array}{ccc}
				P(\mu_1)	  					&	0							&	0	\\
				\gamma_{21} P[\mu_1,\mu_2]		&	P(\mu_2)						&	0	\\
				\gamma_{21}\gamma_{32} P[\mu_1,\mu_2,\mu_3] 	+	\gamma_{31} P[\mu_1,\mu_3]		&	\gamma_{32} P[\mu_2,\mu_3]		&	P(\mu_3)	\\	
			\end{array}
		\right]
		\right)
\]
for $r = 3$ (i.e., three eigenvalues are prescribed). When $r = 2$, the inner maximization can be
performed over ${\mathbb R}$ rather than ${\mathbb C}$; observe that the singular values of the
matrix function remain the same if $\gamma$ is replaced by $|\gamma|$. Similarly, formulas for 
$r > 3$ can be obtained.

A particular case of interest is the distance to a nearest polynomial with an eigenvalue of algebraic
multiplicity $\geq r$. This distance was initially considered by Wilkinson \cite{Wilkinson1971, Wilkinson1984} 
and Ruhe \cite{Ruhe1970} for matrices due to its connection with the sensitivity of eigenvalues.
It has been extensively studied for matrices; see \cite{Malyshev1999} for
$r = 2$, \cite{Ikramov2003, Ikramov2005} for $r=3$ and \cite{Mengi2011} for an arbitrary $r$. For matrix polynomials,
a singular value characterization is derived in \cite{Papat2008} for $r=2$. For matrix polynomials and for an 
arbitrary $r$ we apply Theorem \ref{thm:main_result2} with ${\mathbb S} = \{ \mu \}$ leading us to following
characterization for the distance
\begin{equation}\label{eq:distance_mult}
	{\mathbb M}(\mu)
		:=
	\inf
			\{
				\| \Delta \|_2	\; | \;
					P(\lambda) + \Delta \;\; {\rm has} \; \mu \; {\rm as} \; {\rm an} \; {\rm eigenvalue}	 \;
					{\rm of} \; {\rm algebraic} \; {\rm multiplicity} \geq r		
			\}. 
\end{equation} 
\begin{corollary}[\textbf{Distance to Polynomials with Multiple Eigenvalues}]\label{thm:main_result3}
	Let $P(\lambda) := \sum_{j=0}^m \lambda^j A_j$ with $A_j \in {\mathbb C}^{n\times n}$ and such that
	${\rm rank}(A_m) = n$, $\mu \in {\mathbb C}$, and $r\in {\mathbb Z}^+$. Then the singular value characterization
	\begin{equation}\label{eq:main_sval_char3}
		{\mathbb M}(\mu)
				=
		 \sup_{\Gamma\in {\mathbb C}^{r(r-1)/2}} \;
				\sigma_{- r}
					\left(
					 	{\mathcal Q}(\mu,\Gamma,P)
					\right)
	\end{equation}
	holds, for the distance ${\mathbb M}(P,\mu)$ defined as in (\ref{eq:distance_mult}), provided that the
	multiplicity and linear independence assumptions hold at the optimal
	$\Gamma \in {\mathbb C}^{r(r-1)/2}$, and if $r > n$ provided that the supremum is attained,	
	where ${\mathcal Q}(\mu,\Gamma,P) \in {\mathbb C}^{nr\times nr}$ is block lower triangular whose 
	$n\times n$ submatrix at rows $1+(i-1)n : in$ and columns $1+(\ell-1)n : \ell n$ is given by
	\[
		\begin{cases}
		\sum_{(s_0, s_1, \dots, s_k)} \gamma_{s_1 s_0} \gamma_{s_2 s_1} \dots \gamma_{s_k s_{k-1} } \frac{P^{(k)}(\mu)}{k!}  & i > \ell \\
		P(\mu)	&	i = \ell \\
		0		&	i < \ell
		\end{cases}
	\] 
	with summation over all positive increasing sequences starting with $\ell$ and ending with $i$.
\end{corollary}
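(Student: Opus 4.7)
The plan is to derive this corollary as a direct specialization of Theorem \ref{thm:main_result2} to the singleton set $\mathbb{S} = \{\mu\}$. I will verify that the objects appearing in the two formulas coincide under this specialization, so that no new analysis is needed beyond the already-established general theorem.

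First I would observe that for $\mathbb{S} = \{\mu\}$, the set $\mathbb{S}^r$ contains the single tuple $(\mu,\mu,\ldots,\mu)$, so the outer infimum in (\ref{eq:main_sval_char2}) is vacuous and one is left with a pure supremum over $\Gamma$. Next I would check that the distance $\tau_r(\mathbb{S})$ from (\ref{eq:distance_defn}) agrees with $\mathbb{M}(\mu)$ defined in (\ref{eq:distance_mult}): the condition $\sum_{j=1}^{s} m_j(P+\Delta) \geq r$ with $s = 1$ is precisely the condition that $\mu$ is an eigenvalue of $P + \Delta$ with algebraic multiplicity at least $r$.

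The only substantive step is showing that the block entries of $\mathcal{Q}(\mu,\Gamma,P)$ in Theorem \ref{thm:main_result2} reduce to the form stated in the corollary when all the nodes are equal to $\mu$. From the definition of divided differences in (\ref{eq:divided_differences}), when $\mu_{s_0} = \mu_{s_1} = \cdots = \mu_{s_k} = \mu$ (which is automatic under the specialization, since the nodes are drawn from $\mathbb{S} = \{\mu\}$), the confluent case applies recursively and yields
\[
    P[\mu_{s_0},\ldots,\mu_{s_k}] \;=\; P[\underbrace{\mu,\ldots,\mu}_{k+1}] \;=\; \frac{P^{(k)}(\mu)}{k!}.
\]
Substituting this into the $(i,\ell)$ off-diagonal block formula of Theorem \ref{thm:main_result2} gives exactly the expression for $i > \ell$ stated in the corollary, while the diagonal blocks $P(\mu_i) = P(\mu)$ and the zero blocks above the diagonal are unchanged.

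The main obstacle, if any, is simply making the confluent divided difference identity rigorous; this is standard but should be justified carefully by noting that divided differences are continuous in their nodes (indeed, polynomial in them), so the confluent case can be obtained as the limit of the non-confluent case, matching the recursive definition (\ref{eq:divided_differences}). With that identity in hand, the multiplicity and linear independence hypotheses, along with the hypothesis that the supremum is attained when $r > n$, are inherited verbatim from Theorem \ref{thm:main_result2}, and the corollary follows.
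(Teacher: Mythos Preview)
Your proposal is correct and matches the paper's own derivation: the corollary is obtained simply by applying Theorem~\ref{thm:main_result2} with $\mathbb{S}=\{\mu\}$, so that the outer infimum disappears and the divided differences $P[\mu_{s_0},\dots,\mu_{s_k}]$ collapse to $P^{(k)}(\mu)/k!$ in the confluent case. Your explicit verification of the confluent divided-difference identity is slightly more detailed than what the paper writes, but the argument is the same.
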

\noindent
Minimizing ${\mathbb M}(\mu)$ over all $\mu \in {\mathbb C}$ yields the distance to a nearest
polynomial with an eigenvalue of algebraic multiplicity $\geq r$.

Of particular instances of the formula (\ref{eq:main_sval_char3}) are
\[
	\sup_{\gamma \in {\mathbb C}} \;
	\sigma_{-2}
		\left(
		\left[
			\begin{array}{cc}
				P(\mu)	  				&	0	\\
				\gamma P'(\mu)		&	P(\mu)	\\
			\end{array}
		\right]
		\right),
\]
when $r = 2$, that is the distance to polynomials with $\mu$ as a multiple eigenvalue, which was also
derived in \cite{Papat2008}, and 
\[
	\sup_{\gamma_{21}, \gamma_{31}, \gamma_{32} \in {\mathbb C}} \;
	\sigma_{-3}
		\left(
		\left[
			\begin{array}{ccc}
				P(\mu)	  			&	0									&	0	\\
				\gamma_{21} P'(\mu) 	&	P(\mu)								&	0	\\
				\gamma_{21}\gamma_{32} P''(\mu)/2 	+	\gamma_{31} P'(\mu)		&	\gamma_{32} P'(\mu)		&	P(\mu)	\\	
			\end{array}
		\right]
		\right)
\]
when $r = 3$, that is the distance to polynomials with $\mu$ as a triple eigenvalue.

\section{ Numerical Examples }\label{sec:num_examp}

We illustrate our main results Theorem \ref{thm:main_result2} and Corollary \ref{thm:main_result3}
on two examples that can be visualized by means of the $\epsilon$-pseudospectrum of the
polynomial $P(\lambda)$. The $\epsilon$-pseudospectrum that is related to our results
consists of the eigenvalues of all polynomials within an $\epsilon$ neighborhood with respect
to the 2-norm and when only the constant perturbations are allowed, that is
\begin{eqnarray*}
	\Lambda_\epsilon (P)  &	:=	&	\bigcup_{\| \Delta \|_2 \leq \epsilon}	\Lambda \left( P + \Delta \right) \\
					    &	=	&	\{	z \in {\mathbb C}	\;\; | \;\;	\sigma_{-1} (P(z))	\leq \epsilon	\}.
\end{eqnarray*}
where $\Lambda(P)$ denotes the spectrum of the polynomial $P(\lambda)$.

The derivation in the previous section establishes that any stationary point of the inner
maximization problem in (\ref{eq:main_sval_char}) is a global maximizer as long as the 
multiplicity and linear independence assumptions hold. Consequently, we solve the inner 
problems using quasi-Newton methods numerically. For the numerical solutions of the outer 
minimization problems we depend on the technique recently described in \cite{Kilic2012}, 
which exploits the smoothness properties of a singular value function of a matrix function
depending on a parameter analytically.

Both of the numerical experiments below is performed on a $5\times 5$ matrix polynomial
of degree two, whose entries are selected from a normal distribution with zero mean
and unit variance.

\subsection{Polynomials with Two Prescribed Eigenvalues}
Suppose that ${\mathbb S} = \{ \lambda_1, \lambda_2 \}$ and $r=2$ so that two eigenvalues
are prescribed, and the distance to a nearest polynomial for which at least two of the eigenvalues 
belong to ${\mathbb S}$ is sought. Then the singular value formula (\ref{eq:main_sval_char2}) 
takes the form
\begin{equation}\label{eq:dist_twoeigs}
	\tau_r({\mathbb S})
				=
		\inf_{\mu \in {\mathbb S}^2} \; \sup_{\gamma} \;\;
				\sigma_{- 2} 
					\left(
						\left[
							\begin{array}{cc}
								P(\mu_1)	&	0 \\
								\gamma P [\mu_1,\mu_2]  &	P(\mu_2) \\	
							\end{array}
						\right]
					\right)
\end{equation}
where
\[
	 P [\mu_1,\mu_2]
	 			=
	 \begin{cases}
                  \left( \frac{P(\mu_1) - P(\mu_2)}{\mu_1 - \mu_2} \right) & \text{if $\mu_1 \neq \mu_2$}, \\
                  P'(\mu_1) & \text{if $\mu_1 = \mu_2 $}.
           \end{cases}
\]

Here, we calculate this distance for the quadratic matrix polynomial mentioned at
the beginning of this section with random entries, and for the prescribed eigenvalues
${\mathbb S} =\{ -0.3 + 0.1 i,  -0.65 \}$. The boundaries of the pseudospectra 
of the quadratic matrix polynomial are plotted in Figure \ref{fig:pres_two} together with
the prescribed eigenvalues marked by asterisks. In particular the outer curves correspond 
to the boundary of the $\epsilon$-pseudospectrum for $\epsilon = 0.5879$, which is the computed 
distance $\tau_r({\mathbb S})$ by means of the characterization (\ref{eq:dist_twoeigs}). On one of
these outer curves one of the prescribed eigenvalues $\lambda_1 = -0.3+0.1i$ lies. However,
in general it is possible that neither of the prescribed eigenvalues lies on the boundary
of the $\epsilon$-pseudospectrum for $\epsilon = \tau_r({\mathbb S})$;
both of the prescribed eigenvalues may possibly lie strictly inside the pseudospectrum.

\begin{figure}
   	\begin{center}
		\includegraphics[height=0.45\vsize,]{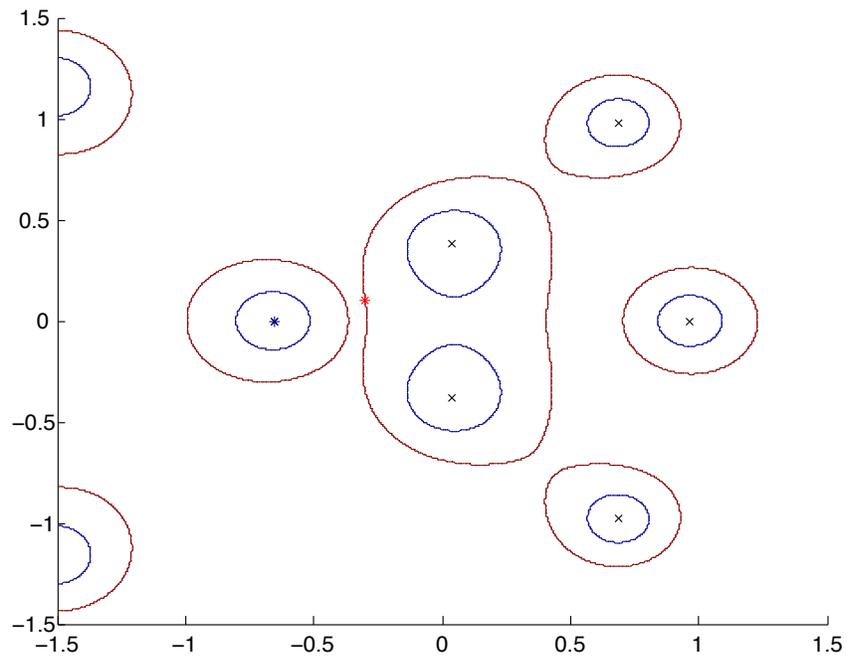}
	\end{center}
	    \caption{  The pseudospectra of the quadratic random matrix
	    polynomial are displayed. The asterisks are the prescribed eigenvalues. The black crosses
	    represent the eigenvalues of the matrix polynomial. One of the prescribed
	    eigenvalues is on the boundary of the $\epsilon$-pseudospectrum (outer curve)
	    for $\epsilon = \tau_r({\mathbb S})$.   
	    }\label{fig:pres_two}
\end{figure}

\subsection{Nearest Polynomials with Multiple Eigenvalues}
By Corollary \ref{thm:main_result3} the distance to a nearest matrix polynomial with 
a multiple eigenvalue is given by
\[
	\inf_{\mu \in {\mathbb C}} \;
	\sup_{ \gamma \in {\mathbb R} } \;\;
			\sigma_{-2}
			\left(
				\left[
					\begin{array}{cc}
						P(\mu)			&	0	\\
						\gamma P'(\mu)		&	P(\mu) \\
					\end{array}
				\right]
			\right).
\]
Indeed, it can be shown that this formula remains valid even when the multiplicity and linear
independence assumptions are violated. For a matrix polynomial of size $n\times n$ and 
degree $m$ the $\epsilon$-pseudospectrum for small $\epsilon$ is comprised of $nm$ 
disjoint components, one around each eigenvalue. The smallest $\epsilon$ such that two 
components of the $\epsilon$-pseudospectrum coalesce is equal to this distance. This is not 
an obvious fact; indeed for matrices this has been established by Alam and Bora \cite{Alam2005} 
not long time ago. The extensions for matrix pencils and matrix polynomials are given in
\cite[Theorem 5.1]{Ahmad2010}  and \cite[Theorem 7.1]{Ahmad2009}, respectively.

For the random quadratic matrix polynomial we compute this distance as 
$0.3211$. Two components of the $\epsilon$-pseudospectrum for $\epsilon = 0.3211$
coalesce as expected in theory. This is illustrated in Figure \ref{fig:multiple}; specifically
the inner-most curves represent the boundary of this $\epsilon$-pseudospectrum.
The point of coalescence of the components $z = 0.0490$, marked by an asterisk, is the 
multiple eigenvalue of a nearest polynomial.

\begin{figure}
   	\begin{center}
		\includegraphics[height=0.45\vsize,]{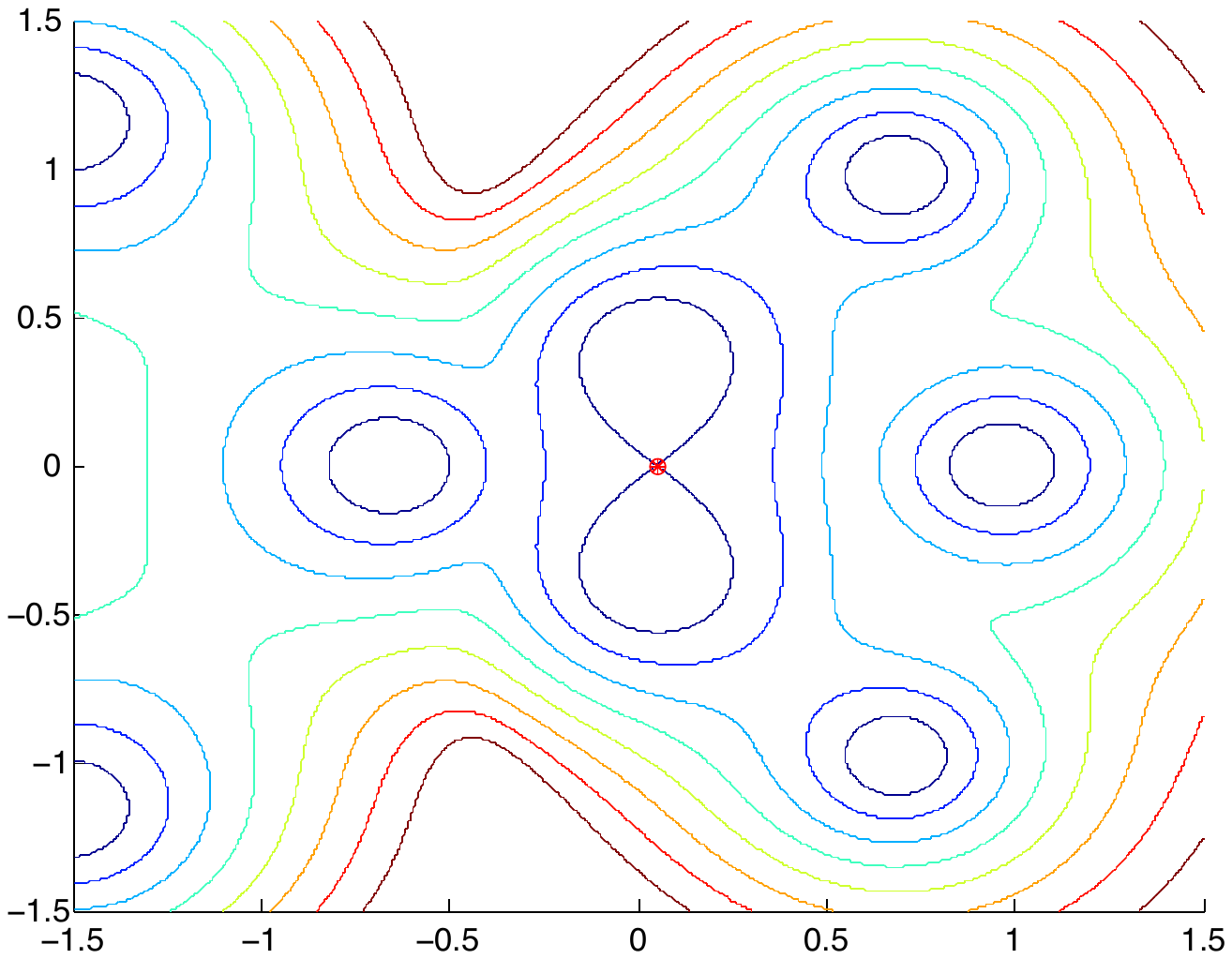}
	\end{center}
	    \caption{  The $\epsilon$-pseudospectra of the quadratic random matrix
	    polynomial for various $\epsilon$ are illustrated. The inner most curve corresponds to the
	    boundary of the $\epsilon$-pseudospectrum for $\epsilon$ equal to
	    the distance to a nearest matrix polynomial with a multiple eigenvalue. The red
	    asterisk is the multiple eigenvalue of a nearest matrix polynomial. }\label{fig:multiple}
\end{figure}

\section{ Concluding Remarks }\label{sec:conc}

We derived a singular value optimization characterization for the distance
from a matrix polynomial to a nearest one with a specified number of eigenvalues
belonging to a specified set. We restricted ourselves to square matrix
polynomials. Extensions to rectangular matrix polynomials are straightforward 
as long as the leading coefficient matrix is full rank.

There are two important open problems that are left untouched by this paper.
First, it is more desirable to allow perturbations to all coefficient matrices from an 
application point of view. In this case an exact singular value formula is not known 
at the moment. Secondly, the results are proven under mild multiplicity and linear
independence assumptions. Our experience with special instances indicates that 
the singular value formula remains valid, even when these assumptions are not 
met. 

\vskip 4ex

\noindent
\textbf{Acknowledgement:} We are grateful to Daniel Kressner for reading an initial version,
and helpful suggestions, which eventually led us to the simplified divided difference formulas 
in Section \ref{subsec:divided_difference} for the singular value characterization (\ref{eq:main_sval_char}).

\appendix

\section{Proof of attainment of the supremum of the singular value function}

Below we establish that, for generic values of $\mu \in {\mathbb C}^r$, the supremum of
\[
	\sigma_{-r}
		\left(
			{\mathcal Q}(\mu,\Gamma,P)
		\right)
\]
over all $\Gamma \in {\mathbb C}^{(r-1)r/2}$ is attained for $r \leq n$. Here the block lower triangular matrix
${\mathcal Q}(\mu,\Gamma,P) \in {\mathbb C}^{nr\times nr}$ is as in Theorem \ref{thm:main_result2}.
The attainment result here is a generalization of the result presented in the appendix in  \cite{Kressner2011}, and
its proof below mimics the proof over there.

\begin{theorem}\label{thm:decay_polynomial}
Suppose that $P\left[ \mu_{k}, \mu_{l}  \right]$ has full rank for each $k$ and $l$ such that
$k < l$. Then for $j = 1,\dots,n$ we have
\[
	\sigma_{-j}
		\left(
			{\mathcal Q}(\mu,\Gamma,P)
		\right)	
			\rightarrow	0
			\;\;\; {\rm as}	\;\;\;
			\Gamma \rightarrow \underline{\Gamma}	
\]
where $\| \underline{\Gamma} \| = \infty$.
\end{theorem}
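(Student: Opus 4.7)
The plan is to establish the decay of the $n$ smallest singular values of $\mathcal{Q}(\mu, \Gamma, P)$ as $\|\Gamma\| \to \infty$ by constructing, for each large $\Gamma$, an $n$-dimensional subspace of approximate null vectors in $\mathbb{C}^{nr}$. First, by passing to a subsequence and using compactness of the unit sphere in $\mathbb{C}^{r(r-1)/2}$, it suffices to establish the claim along a ray $\Gamma = t\hat{\Gamma}$ with $t \to \infty$ and $\hat{\Gamma}$ a fixed unit direction. A useful global observation is that $\det(\mathcal{Q}) = \prod_i \det P(\mu_i)$ is constant in $\Gamma$, since $\mathcal{Q}$ is block lower triangular with diagonal blocks $P(\mu_i)$; thus growth of some singular values as $\|\Gamma\| \to \infty$ forces others to decay, and the task is to exhibit at least $n$ of the decaying ones.

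For fixed $\hat{\Gamma}$, since $\hat{\Gamma}\neq 0$ one can choose a pair $(i_0,\ell_0)$ with $\hat{\gamma}_{i_0,\ell_0}\neq 0$ and $i_0$ maximal, so that no block equation below row $i_0$ is active under the ansatz below. The proposed ansatz is: take $x_{i_0}\in\mathbb{C}^n$ as a free parameter, define
\[
x_{\ell_0} := -\bigl(t\,\hat{\gamma}_{i_0,\ell_0}\bigr)^{-1}\,P[\mu_{\ell_0},\mu_{i_0}]^{-1}\,P(\mu_{i_0})\,x_{i_0}
\]
so that the dominant $(i_0,\ell_0)$-block contribution $t\,\hat{\gamma}_{i_0,\ell_0} P[\mu_{\ell_0},\mu_{i_0}]\,x_{\ell_0}$ cancels $P(\mu_{i_0})\,x_{i_0}$ in row $i_0$, and set the remaining columns $x_j=0$. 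The pairwise full-rank hypothesis on $P[\mu_{\ell_0},\mu_{i_0}]$ is used precisely for the inversion. A direct computation shows $\|\mathcal{Q}(\mu,t\hat{\Gamma},P)\,\mathrm{vec}(X)\|/\|\mathrm{vec}(X)\| = O(1/t)$, and as $x_{i_0}$ ranges over $\mathbb{C}^n$ this yields an $n$-dimensional subspace $S_t$ of approximate null vectors. The Courant--Fischer min-max characterization then gives $\sigma_{-j}(\mathcal{Q}) \leq \sigma_{-n}(\mathcal{Q}) \leq \max_{x\in S_t}\|\mathcal{Q} x\|/\|x\| \to 0$ for $j=1,\dots,n$.

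The main obstacle is that for a generic $\hat{\Gamma}$ with several nonzero components, the off-diagonal blocks $\mathcal{Q}_{ij}(\mu,t\hat{\Gamma})$ acquire higher-order-in-$t$ contributions from longer sequences in the divided-difference formula, so the two-term ansatz above can produce residuals in intermediate rows $\ell_0<i<i_0$ that grow in $t$ rather than decay. The remedy is to refine the ansatz to a multi-term cascade $x_j = \sum_{k\geq 1} x_j^{(k)}/t^k$ for the intermediate indices, solving order by order in $1/t$; at each stage only the invertibility of a pairwise divided difference $P[\mu_{j},\mu_{j+1}]$ is invoked, so the hypothesis remains sufficient. The degenerate case in which no $\hat{\gamma}_{r,\cdot}$ is nonzero reduces $\mathcal{Q}$ to the block-diagonal form $\mathrm{blkdiag}(\mathcal{Q}',P(\mu_r))$ where $\mathcal{Q}'$ is of the same form for the $(r-1)$-tuple $(\mu_1,\dots,\mu_{r-1})$, and the conclusion follows by induction on $r$ with the base case $r=2$ verified directly (either via the two-term ansatz or a Schur-complement calculation on $\mathcal{Q}^\ast\mathcal{Q}$), as the explicit $r=2$ formula at the end of Section~\ref{subsec:divided_difference} makes transparent.
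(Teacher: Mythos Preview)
Your approach is genuinely different from the paper's, and as written it has real gaps. The paper does not reduce to a ray and does not build approximate null vectors; instead it works with $\mathcal{Q}^{-1}$. Assuming first that no $\mu_j$ is an eigenvalue (so $\mathcal{Q}$ is invertible), it selects an unbounded entry $\gamma_{lk}$ with $l-k$ \emph{minimal}, so that every $\gamma_{ij}$ with $i-j<l-k$ stays bounded. An induction on $l-k$ then shows that the $(l,k)$ block of $\mathcal{Q}^{-1}$ equals $-\gamma_{lk}\,P(\mu_l)^{-1}P[\mu_k,\mu_l]P(\mu_k)^{-1}$ plus a term depending only on the bounded entries; since $P[\mu_k,\mu_l]$ is full rank, all $n$ singular values of that block diverge, hence so do the $n$ largest singular values of $\mathcal{Q}^{-1}$. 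The case where some $\mu_j$ is an eigenvalue is handled by a small perturbation of $A_0$.

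Two concrete problems with your sketch. First, the ray reduction is not justified: passing to a subsequence with $\Gamma_k/\|\Gamma_k\|\to\hat\Gamma$ does not place $\Gamma_k$ on the ray $t\hat\Gamma$, and since $\mathcal{Q}$ is a polynomial in $\Gamma$ of degree up to $r-1$, the discrepancy $\|\Gamma_k-\|\Gamma_k\|\hat\Gamma\|$ (which may grow) is not negligible. Second, and more seriously, the organizing choice ``$i_0$ maximal'' does not control the intermediate rows, and your cascade is neither specified nor has the properties you claim. Take $r=3$ with $\hat\gamma_{32}=0$ and $\hat\gamma_{21},\hat\gamma_{31}\neq 0$: then $i_0=3$, $\ell_0=1$ is forced, the two-term ansatz leaves an $O(1)$ residual in row~2, and cancelling it requires solving $P(\mu_2)x_2^{(0)}=-\hat\gamma_{21}P[\mu_1,\mu_2]x_1^{(1)}$, i.e.\ inverting $P(\mu_2)$, not a consecutive divided difference $P[\mu_j,\mu_{j+1}]$. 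So the cascade, once written out, needs the $P(\mu_j)$ to be invertible---exactly the assumption the paper makes and then removes by perturbation. Your determinant observation, while correct, is never used. If you want to salvage the direct approach, replace the ``$i_0$ maximal'' selection by the paper's ``$l-k$ minimal'' selection (this is the key idea that keeps all shorter-bandwidth entries bounded), and then your two-term construction with $\ell_0=k$, $i_0=l$ already gives an $n$-dimensional space with Rayleigh quotient $O(1/|\gamma_{lk}|)$, because $\mathcal{Q}_{l,k}=\gamma_{lk}P[\mu_k,\mu_l]+\text{(bounded)}$ and every $\mathcal{Q}_{i,k}$ with $k<i<l$ is bounded.
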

\begin{proof}
Since $\| \Gamma \| \rightarrow \infty$, there exists a $\gamma_{lk}$ such that $| \gamma_{lk} | \rightarrow \infty$.
Choose an unbounded $\gamma_{lk}$ so that $l - k$ is as small as possible. Thus $| \gamma_{ij}|$ is bounded 
for each $i, j$ such that $i - j < l - k$. 

Let us first suppose that none of $\mu_1, \dots, \mu_r$ is an eigenvalue of $P(\lambda)$.
Our approach is based on establishing that the largest $n$ singular values of ${\mathcal Q}(\mu,\Gamma,P)^{-1}$
diverges to $\infty$ as $| \gamma_{lk} | \rightarrow \infty$. Clearly, this is equivalent to the decay of
the least $n$ singular values of ${\mathcal Q}(\mu,\Gamma,P)$ to zero. In this respect we claim 
that ${\mathcal Q}(\mu,\Gamma,P)^{-1}$ is of the form
\[
	\left[
			\begin{array}{cccccc}
				P(\mu_1)^{-1}	&	0				&	0			&	\dots		&				&	0				\\
				X_{21}		&	P(\mu_2)^{-1}		&	0			&			&				&	0				\\
				X_{31}		&	X_{32}			&     P(\mu_3)^{-1}	&			&									\\
							&					&				& 	\ddots	&									\\
							&					&				&			& P(\mu_{r-1})^{-1}	&	0				\\	
				X_{r1}		&	X_{r2}			&				&			& X_{r(r-1)}		&	P(\mu_r)^{-1}		\\
			\end{array}
	\right]	
\]
where 
\begin{equation}\label{eq:blocks}
	X_{lk} = -\gamma_{lk} P(\mu_l)^{-1} P[\mu_k, \mu_l] P(\mu_k)^{-1} + P_{\Delta,lk}
\end{equation}
and $P_{\Delta,lk}$ is a polynomial in $\gamma_{(k+1)k}, \dots, \gamma_{(l-1)k},$ $\dots, \gamma_{l(l-1)}$, which are all 
bounded. The proof of this later claim is by induction on $l-k$. As the base case, when $l = k+1$, we have 
\[
	X_{(k+1)k} = -\gamma_{(k+1)k} P(\mu_{k+1})^{-1} P[\mu_k, \mu_{k+1}] P(\mu_k)^{-1}.
\]
For the inductive case, let us partition ${\mathcal Q}(\mu,\Gamma,P)$ into $n\times n$ blocks and denote the
submatrix at the $i$th block row and $j$th block columns with ${\mathcal Q}_{ij}$. Then, by multiplying the $l$th
block row of ${\mathcal Q}(\mu,\Gamma,P)$ with the $k$th block column of its inverse for $l > k$ and letting
$X_{kk} = P(\mu_k)^{-1}$, we have $\sum_{j = k}^l {\mathcal Q}_{lj} X_{jk}	=	0$ implying
\begin{eqnarray*}
	X_{lk}	=	-\gamma_{lk} P(\mu_l)^{-1} P[\mu_k, \mu_l] P(\mu_k)^{-1}
						-
	\sum_{(s_0,s_1,\dots,s_j)} \gamma_{s_1 s_0} \dots \gamma_{s_j s_{j-1}} P(\mu_l)^{-1} P[\mu_{s_0}, \dots,\mu_{s_j}] P(\mu_k)^{-1} \\
					-
		\sum_{j = k+1}^{l-1}  P(\mu_l)^{-1} {\mathcal Q}_{lj} X_{jk}. \hskip 50ex
\end{eqnarray*}
where again the first summation is over all increasing sequences of integers of length at least two 
starting with $k$ and ending with $l$. By the inductive hypothesis $X_{jk}$ for $j = k+1,\dots,l-1$ is a 
polynomial in $\gamma_{(k+1)k}, \dots, \gamma_{(l-1)k},$ $\dots, \gamma_{l(l-1)}$ only. 
This confirms $(\ref{eq:blocks})$.

Now, due to the assumption that $P[\mu_k, \mu_l]$ is full rank, from (\ref{eq:blocks}) we have
$\sigma_j(X_{lk}) \rightarrow \infty$ for $j = 1,\dots,n$. Thus the inequality
\[
	\sigma_j(X_{lk}) 
		\leq
	\sigma_j \left(  {\mathcal Q}(\mu,\Gamma,P)^{-1}  \right)
\]
yields $\sigma_j \left(  {\mathcal Q}(\mu,\Gamma,P)^{-1}  \right) \rightarrow \infty$ for each $j = 1,\dots,n$
as desired.

Finally, if some $\mu_j$ are eigenvalues of $P(\lambda)$, for each $\beta > 0$ there exists a $\Delta \in {\mathbb C}^{n\times n}$
such that $\| \Delta \|_2 \leq \beta$ and $P_\Delta(\lambda) := P(\lambda) + \Delta$ does not have any of $\mu_j$ 
as  eigenvalues. The previous argument applies to $P_{\Delta}(\lambda)$, in particular the least $n$ singular
values of the associated Kronecker matrix decay to zero as $| \gamma_{lk} | \rightarrow \infty$. Thus, for some 
$\delta_{\beta}$ for all $\gamma_{lk}$ such that $|\gamma_{lk}| > \delta_{\beta}$ we have
\[
	 \sigma_{-j} \left(  {\mathcal Q} (\mu,\Gamma, P_\Delta ) \right) \; < \; \beta
			\;\;\;		\Longrightarrow	\;\;\;
	\sigma_{-j} \left(  {\mathcal Q} (\mu,\Gamma,P)  \right) < 2\beta
\]
completing the proof.
\end{proof}
\noindent
The previous theorem and the continuity of the singular values ensure that the
supremum of 
$
	\sigma_{-r}
		\left(
			{\mathcal Q}(\mu,\Gamma,P)
		\right)
$
over all  $\Gamma \in {\mathbb C}^{(r-1)r/2}$ is attained provided $r \leq n$.

The hypotheses that $P[\mu_k,\mu_l]$ are full rank hold generically over all pairs $(\mu_k, \mu_l)$.
If the degree of the polynomial $P(\lambda)$ is one, $P[\mu_k,\mu_l] = A_1$ is full rank for all $(\mu_k, \mu_l)$.
Otherwise, suppose $P[\mu_k,\mu_l] = \sum_{j=1}^m p_j[\mu_k,\mu_l] \cdot A_j$ is singular with $p_j(x) = x^j$. 
Since the leading coefficient $A_m$ is non-singular  and $p_j[\mu_k + \delta,\mu_l] - p_j[\mu_k,\mu_l]$
is a monic polynomial of $\delta$ of degree $j-1$, thus for $j > 1$ nonzero essentially everywhere,
$P[\mu_k+\delta,\mu_l] = \sum_{j=1}^m p_j[\mu_k+\delta,\mu_l] \cdot A_j$ is non-singular essentially 
for all small $\delta$.

\bibliography{polynomial_specified}
\end{document}